\title{Inequalities between overpartition ranks for all moduli}
\author{Alexandru Ciolan}
\address{Max-Planck-Institue f\"ur Mathematik, Vivatsgasse 7, 53111 Bonn}
\email{ciolan@mpim-bonn.mpg.de}
\newtheorem{Thm}{Theorem}
\newtheorem{Lem}{Lemma}
\theoremstyle{remark}
\newtheorem{rem}{Remark}
\newcommand{\cal}{\mathcal}
\newcommand{\bb}{\mathbb}
\DeclareMathOperator{\RE}{Re}
\DeclareMathOperator{\IM}{Im}
\let\@@pmod\pmod
\DeclareRobustCommand{\pmod}{\@ifstar\@pmods\@@pmod}
\def\@pmods#1{\mkern4mu({\operator@font mod}\mkern 6mu#1)}
\newcommand{\N}{\mathbb{N}}
\renewcommand{\O}{\mathcal{O}}
\newcommand{\U}{\mathcal{U}}
\newcommand{\Z}{\mathbb{Z}}
\newcommand{\R}{\mathbb{R}}
\newcommand{\C}{\mathbb{C}}
\newcommand{\V}{\mathcal{V}}
\renewcommand{\l}{\ell}
\newcommand{\ol}{\overline}
\newcommand{\primesum}{\sideset{}{'}}
\numberwithin{equation}{section} 
\begin{document}
\keywords{Asymptotics, circle method, Dyson's rank, inequalities, Kloosterman sums, overpartitions}
\subjclass[2010]{11P72, 11P76, 11P82}
\begin{abstract}
In this paper we give a full description of the inequalities that can occur between overpartition ranks. If $ \ol N(a,c,n) $ denotes the number of overpartitions of $ n $ with rank congruent to $ a  $ modulo $ c,$  we prove that  for any $ c\ge7 $ and $ 0\le a<b\le\left\lfloor\frac{c}{2}\right\rfloor $ we have $ \ol N(a,c,n)>\ol N(b,c,n) $ for $n$ large enough.    That the sign of the rank differences $ \ol N(a,c,n)-\ol N(b,c,n) $ depends on the residue class of $ n $ modulo $ c $ in the case of small moduli, such as $ c=6, $ is known due to the work of Ji, Zhang and Zhao (2018) and Ciolan (2020). We show that the same behavior holds  for $ c\in\{2,3, 4,5\}. $  
\end{abstract}
\maketitle
\section{Introduction and statement of results}
\subsection{Dyson's rank}
In his attempt to find a combinatorial interpretation of the famous congruences 
\begin{align*}
p(5n+4)&\equiv  {0 \pmod*5,}\\
p(7n+5)&\equiv  {0 \pmod*7,}\\
p(11n+6)&\equiv  {0 \pmod*{11}}
\end{align*} 
discovered by Ramanujan \cite{Ram} for  $ p(n) $,  the number of partitions of $ n, $ Dyson \cite{Dyson} introduced the \textit{rank} of a partition, often referred to as \textit{Dyson's rank}, which is defined to be the largest part of the partition minus the number of its parts. As shown by Atkin and Swinnerton-Dyer \cite{ASD}, the rank would indeed explain the congruences modulo 5 and 7, but not those modulo 11. To justify the latter congruences, Dyson conjectured the existence of another partition statistic, called \textit{crank}. Some forty years later, Andrews and Garvan \cite{AG} found the right definition of the crank and proved that it simultaneously explains all the three congruences. 
The way in which Atkin and Swinnerton-Dyer \cite{ASD} proved Dyson's claim that the rank is equidistributed modulo 5 and 7, i.e., that for $ 0\le s\le 4 $ and $ 0\le t\le 6 $ we have
\begin{align*}
N(s,5,5n+4)&=\frac{p(5n+4)}{5},\\
N(t,7,7n+5)&=\frac{p(7n+5)}{7},
\end{align*} 
where $ N(a,c,n) $ denotes the number of partitions of $ n $ with rank congruent to $ a  $ modulo $ c, $ was by computing the generating functions associated to every rank difference $ N(s,\ell,\ell n+d)-N(t,\ell, \ell n+d), $ with $ 0\le d,s,t<\l, $ for $ \ell=5 $ and 7. While many of these turned out to be non-trivially zero, others were shown to be
infinite products or generalized Lambert series related to Ramanujan's third order
mock theta functions. Analogus, yet more technical formulas were  found by Atkin and Hussain \cite{AH} for $ \ell=11. $ 
\par It is not surprising that this generated a great amount of interest in studying rank differences and  rank inequalities for other moduli. In this regard, we have the inequalities 
\begin{eqnarray*}
& N(0,2,2n)  <  N(1,2,2n)&\text{if $n\ge1,$ }\\
& \phantom{2}N(0,4,n)  >  N(2,4,n)\phantom{2}&\text{if $n>26$ and $n\equiv0,1\pmod*4,$ }\\
& \phantom{2}N(0,4,n)  <  N(2,4,n)\phantom{2}&\text{if $n>26$ and $n\equiv2,3\pmod*4$ }
\end{eqnarray*}
found in \cite{AL} and \cite{Lewis} by Andrews and Lewis, who also conjectured that 
\begin{equation}\label{AndrewsLewisIneqs}
\begin{aligned}
 N(0,3,n)  <  N(1,3,n)&\quad\text{if $n\equiv0,2\pmod*3,$ }\\
 N(0,3,n)  >  N(1,3,n)&\quad\text{if $n\equiv1\phantom{,2}\pmod*3,$ }
\end{aligned}
\end{equation}
conjecture which was proven by Bringmann \cite{B} for  $ n\notin\{3,9,21\}, $ in which cases equality holds. Further, Bringmann and Kane \cite{BK} proved that, for any odd $ c>9 $ and for $ 0\le a<b\le\frac{c-1}{2}, $ we have $$ N(a,c,n)>N(b,c,n) $$ 
for $ n $ large enough. They also studied the sign of the rank differences $ N(a,c,n)-N(b,c,n)  $ for the moduli $ c=5,7 $ and 9, showing that this depends on the residue class of $ n $ modulo $ c. $ 
\subsection{Overpartitions}By an \textit{overpartition} of $ n $ we mean a partition in which the \textit{first} occurrence of a part may (or may not) be overlined, and by $ \ol p(n) $  we denote the number of overpartitions of $ n $. To illustrate with an example, we have $ p(4)=5, $ as the partitions of $ n=54  $ are given by $$4,~3+1,~2+2,~2+1+1,~1+1+1+1,$$
whereas $ \ol p(n)=14, $ as the overpartitions of $ n=4 $ are $$4,~\overline 4,~3 + 1,~\overline 3+ 1,~3 +\overline 1,~\overline 3 + \overline 1,~2+ 2,~\ol 2 + 2,~2+ 1+1,~\ol2 + 1 + 1,~2+ \ol1 + 1,~\ol2 +\ol 1 + 1,~1+1 + 1 + 1,~\ol 1+1 + 1 + 1.$$ \par The rank (also called $ D $-rank) of an overpartition is defined in exactly the same way as for partitions. We denote by $ \ol N(m,n) $ the number of overpartitions of $ n $ with rank $ m, $ and by $ \ol N(a,c,n) $ the number of overpartitions of $ n $ with rank congruent to $ a $ modulo $ c. $ 
\par As opposed to Ramanujan's congruences for the partition function, in the case of overpartitions there are no congruences of the form $ \ol p(\ell n+d)\equiv0\pmod*{\l} $ for primes $ \l\ge3. $ Therefore, the rank differences $\ol N(s,\ell,\ell n+d)-\ol N(t,\ell,\ell n+d)$ provide a measure of the extent to which the
rank fails to produce such a congruence. As such, trying to find the associated generating functions and, whenever possible, the sign of these rank differences and the resulting inequalities, has turned into a vivid area of recent research. 
\subsection{Motivation} Lovejoy and Osburn \cite{LO} found formulas for the rank differences $\ol N(s,\ell,\ell n+d)-\ol N(t,\ell,\ell n+d)$ in terms of modular functions and generalized Lambert series for $ \ell=3 $ and $ \l=5, $ whereas Jennings-Shaffer \cite{CJS} computed the rank differences for $ \ell=7 $ using the result of Bringmann and Lovejoy \cite{BJoy} that the overpartition rank function is the holomorphic part of a harmonic Maass form. More recently, Cui, Gu and Su \cite{CGS} computed the rank differences for $ \ell=4 $ and $ \l=8,  $ obtaining a few identities and inequalities that were also independently formulated and proven in the current paper, while  Ji, Zhang and Zhao \cite{JZZ}, and Wei and Zhang \cite{WZ} computed the rank differences for $ \l=6 $ and $ \l=10 $ by relating them to Ramanjuan's third and tenth order mock theta functions. They also established a few inequalities and left several others as conjectures, all of which were subsequently proven, with different methods, by the author \cite{Ciolan}.
Some of them are simple inequalities between ranks, similar to \eqref{AndrewsLewisIneqs}, such as
\begin{eqnarray*}
& \ol N(0,3,n)  >  \ol N(1,3,n)&\text{if $n\equiv0,1\pmod*3,$ }\\
& \ol N(0,3,n)  <  \ol N(1,3,n)&\text{if $n\equiv2\phantom{,1}\pmod*3,$ }
\end{eqnarray*} 
while others involve sums of ranks, e.g., 
\begin{align*}
\overline N(0,6,n)+\overline N(1,6,n)&> \overline N(2,6,n)+\overline N(3,6,n),\\
\overline N(0,10,n)+\overline N(1,10,n)&> \overline N(4,10,n)+\overline N(5,10,n).\end{align*}
\subsection{Main results} To the best of our knowledge, no such inequalities have been found for moduli other than $ \ell\in\{6,10 \}$ and, in some particular cases,  $ \ell\in\{4,8\}. $ In this paper we give a complete characterization of the  inequalities of the form  
\begin{equation}
\label{GenericIneq}
\ol N(a,c,n)>\ol N(b,c,n),
\end{equation} with $ 0\le a<b\le\left\lfloor\frac{c}{2}\right\rfloor, $
for all moduli $ c\ge2. $  This is inspired, partly, by the results of Bringmann and Kane \cite{BK} on inequalities between partition ranks, and comes to complete the work initiated in \cite{Ciolan} by the author. In contrast to \cite{BK} however, where only the case $ 2\nmid c $ was dealt with, in the case of partition ranks $ N(a,c,n) $, here we are also able to treat the case when $ c $ is even.
\par As we will see in Theorem \ref{ThmGeneralIneqs}, the inequality \eqref{GenericIneq} holds for all $ c\ge7 $ and $ n $ large enough, while for $ 2\le c\le 5,$ the only cases that were not treated by now and which we study in Theorems \ref{ThmMod5}--\ref{ThmMod4}, the sign of the inequality changes with the residue class of $ n $ modulo $ c. $ In addition to the inequalities, we prove that some interesting patterns and identities hold for $ c\in\{2,4\}. $  \par If one might perhaps expect that the higher moduli are influenced by their smallest prime divisors, which would then determine the sign of the rank difference, we will show that this is not the case and we will explain the reason for which the inequalities are not affected by the residue class modulo $ c $ for $  c\ge7 $  and $ n $ large enough. More precisely, we prove the following.
\begin{Thm}
\label{ThmGeneralIneqs}
If $ c\ge7, $ there exists $ n_{a,b,c} $ depending on $ a,b,c $ such that 
\[\ol N(a,c,n)>\ol N(b,c,n)\]
for $ 0\le a<b\le\left\lfloor\frac{c}{2}\right\rfloor $ and for any  $n>n_{a,b,c}.$ 
\end{Thm}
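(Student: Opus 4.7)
The plan is to convert the inequality into an asymptotic statement via orthogonality on $\Z/c\Z$: one writes
\[
\ol N(a,c,n)-\ol N(b,c,n)=\frac{1}{c}\sum_{j=1}^{c-1}\left(\zeta_c^{-aj}-\zeta_c^{-bj}\right)\ol A\!\left(\zeta_c^{j};n\right),
\]
where $\zeta_c:=e^{2\pi i/c}$ and $\ol A(z;n):=\sum_{m\in\Z}\ol N(m,n)z^m$. By Bringmann--Lovejoy \cite{BJoy} each series $\sum_{n}\ol A(\zeta_c^{j};n)q^n$ is the holomorphic part of a weight-$1/2$ harmonic Maass form on a congruence subgroup of $\mathrm{SL}_2(\Z)$, and hence admits a Rademacher-style exact expansion. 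I would then combine the circle method (as adapted in \cite{B,BK}) with the modular transformation of the non-holomorphic completion to extract a precise asymptotic for each $\ol A(\zeta_c^{j};n)$.

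The circle-method analysis yields, for each $j$, a dominant term of the shape
\[
\ol A\!\left(\zeta_c^{j};n\right)=\kappa_{c,j}\,\frac{e^{\beta_{c,j}\sqrt{n}}}{n^{3/4}}\bigl(1+O(n^{-1/2})\bigr)+(\text{other-cusp contributions}),
\]
where $\kappa_{c,j}\in\C^{\times}$ is explicit and $\beta_{c,j}>0$ is strictly decreasing in $s_j:=\min\{j/c,\,1-j/c\}$, the distance of $\zeta_c^{j}$ to $1$ on the unit circle. Hence $\beta_{c,j}$ is strictly maximal at $j\in\{1,c-1\}$. Pairing these two conjugate terms via $\zeta_c^{-aj}+\zeta_c^{aj}=2\cos(2\pi a/c)$ collapses the leading part of the orthogonality sum to
\[
\ol N(a,c,n)-\ol N(b,c,n)=\frac{2\,\RE(\kappa_{c,1})}{c}\left(\cos\tfrac{2\pi a}{c}-\cos\tfrac{2\pi b}{c}\right)\frac{e^{\beta_{c,1}\sqrt{n}}}{n^{3/4}}+(\text{strictly smaller}).
\]
Since the map $x\mapsto\cos(2\pi x/c)$ is strictly decreasing on $[0,c/2]$ and $0\le a<b\le\lfloor c/2\rfloor$, the prefactor is strictly positive once one checks that $\RE(\kappa_{c,1})>0$, yielding the theorem for $n$ exceeding an effective threshold $n_{a,b,c}$.

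The main obstacle is proving that the principal rate $\beta_{c,1}$ \emph{strictly} exceeds every other exponential appearing in the expansion. The ordering $\beta_{c,1}>\beta_{c,j}$ for $2\le j\le c-2$ follows directly from monotonicity in $s_j$, but the modular transformation law of the Bringmann--Lovejoy form produces additional exponentials at cusps $h/k$ whose growth rates depend on $\gcd(k,c)$ in a delicate way. It is precisely these secondary contributions that match the principal rate when $c\le 6$ and cause the residue-class-dependent sign changes studied in \cite{Ciolan,JZZ}; for $c\ge 7$ one should have a strictly positive gap, after which the Kloosterman-sum bounds of \cite{BK,BJoy} absorb the remaining error. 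Quantifying this gap uniformly in $j$ and $c$, verifying $\RE(\kappa_{c,1})>0$, and keeping $n_{a,b,c}$ effective in $a,b,c$ will constitute the technical heart of the argument.
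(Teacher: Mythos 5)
Your proposal follows essentially the same route as the paper: orthogonality of roots of unity to reduce the rank difference to the generating functions $\O(\zeta_c^j;q)$, the Bringmann--Lovejoy transformations plus the circle method to extract asymptotics for their coefficients, identification of the pair $j\in\{1,c-1\}$ as the source of the dominant exponential, and positivity of $\cos\left(\frac{2\pi a}{c}\right)-\cos\left(\frac{2\pi b}{c}\right)$ for $0\le a<b\le\left\lfloor\frac c2\right\rfloor$. You also correctly diagnose that the entire difficulty lies in separating the principal growth rate from the secondary exponentials attached to the cusps $\frac hk$ according to whether $c\mid k$ or $c\nmid k$, which is exactly why $c\le 6$ behaves differently.

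That said, everything you defer to ``the technical heart'' is what the paper's proof actually consists of, and two of your heuristic statements need adjusting before the plan closes. First, the main term has size $n^{-1/2}e^{\beta_{c,1}\sqrt n}$ rather than $n^{-3/4}e^{\beta_{c,1}\sqrt n}$: Theorem \ref{MainThm} gives the $k=1$, $j=1$, $r=0$ contribution explicitly as $\frac{4}{c\sqrt n}\tan\left(\frac{\pi}{c}\right)\rho_1(a,b,c)\sinh\left(\pi\sqrt n\left(1-\frac4c\right)\right)$, and the relevant Kloosterman-type coefficient $D_{1,c,1}(-n,0)$ is a positive constant \emph{independent of $n$}; this is the verification of your ``$\RE(\kappa_{c,1})>0$'' and it rules out the (real) danger that the leading coefficient vanishes along some residue class of $n$ --- it should not be left implicit, since for $c=5$ exactly such coefficient cancellations occur. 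Second, $\beta_{c,j}$ is not strictly decreasing in $s_j=\min\{j/c,1-j/c\}$: for $\frac14<s_j\le\frac12$ one has $\delta_{c,1,r}=0$, the corresponding exponential disappears, and the dominant rate for that $j$ is the $j$-independent $\frac{\pi}{c}$ coming from the cusps with $c\mid k$ (present only for odd $c$). The quantitative comparison the paper actually makes is $\pi\left(1-\frac4c\right)$ against $\frac\pi c$ (from $c\mid k$, $k=c$), against $\pi\left(1-\frac8c\right)$ (from $k=1$, $j\ge2$), and against $\pi\left(\frac12-\frac2c\right)$ (from $k\ge2$), all strictly smaller precisely when $c\ge6$; making $n_{a,b,c}$ effective then requires the explicit bounds on Kloosterman sums, Mordell integrals and Farey-arc errors that occupy most of the proof. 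Finally, for even $c$ your sum over $j$ includes $j=\frac c2$, where $\O(-1;q)$ has only polynomially growing coefficients, and the $c\mid k$, $2\nmid k$ family is empty; the paper treats this as a separate, easier case, and also passes to $j'=\frac{j}{(c,j)}$, $c'=\frac{c}{(c,j)}$ when $(c,j)>1$. None of this invalidates your approach, but as written the proposal is a correct plan rather than a proof.
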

For the only moduli left to study, namely $ c\in \{ 2,3,4,5\}, $ we prove that the following results hold. The reader interested in the inequalities and identities proven for $ c= 6 $ can consult \cite{Ciolan} and \cite{JZZ}.
\begin{Thm}
\label{ThmMod5}
Apart from a few exceptions,  we have 
\begin{align*}
\ol N(0,5,n)&>\ol N(1,5,n)~ \text{if~$n\equiv0,1,3\pmod*5,$}\\
\ol N(0,5,n)&<\ol N(1,5,n)~ \text{if~$n\equiv2,4\phantom{,1}\pmod*5,$} \\
\ol N(1,5,n)&>\ol N(2,5,n)~ \text{if~$n\equiv0,2,4\pmod*5,$}\\
\ol N(1,5,n)&<\ol N(2,5,n)~ \text{if~$n\equiv1,3\phantom{,1}\pmod*5,$}\\
\ol N(0,5,n)&>\ol N(2,5,n)~ \text{if~$n\equiv0,1,3\pmod*5,$}\\
\ol N(0,5,n)&<\ol N(2,5,n)~ \text{if~$n\equiv4\phantom{,1,2}\pmod*5,$}\\
\ol N(0,5,n)&=\ol N(2,5,n)~ \text{if~$n\equiv2\phantom{,1,2}\pmod*5.$}
\end{align*}
\end{Thm}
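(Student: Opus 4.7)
The plan is to analyze the generating functions for the rank differences $\overline{N}(a,5,5n+d)-\overline{N}(b,5,5n+d)$ with $(a,b)\in\{(0,1),(0,2),(1,2)\}$ and $d\in\{0,1,2,3,4\}$. These were computed by Lovejoy and Osburn \cite{LO} and are expressed in terms of eta-quotients and generalized Lambert (Appell--Lerch type) series. Proving Theorem \ref{ThmMod5} amounts to showing that each of the fifteen resulting generating series has its $n$-th coefficient of the predicted sign (or is identically zero in the one equality case), with at most finitely many exceptions.

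First I would isolate the identity. For $n\equiv 2\pmod 5$, the claim $\overline{N}(0,5,n)=\overline{N}(2,5,n)$ should be read off directly from the Lovejoy--Osburn $d=2$ formula: the corresponding generating function should simplify to $0$, possibly after a routine rearrangement of their expression in terms of standard eta-quotients.

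For the remaining fourteen strict inequalities I would apply the circle method to each generating function, following the strategy used for modulus $6$ in \cite{Ciolan}. Since by Bringmann--Lovejoy \cite{BJoy} the overpartition rank generating function is the holomorphic part of a harmonic Maass form, one has explicit modular transformations at every cusp, as well as the completions needed to convert the Appell--Lerch pieces into objects with clean modular behavior. The major-arc analysis produces, for each $(a,b,d)$, a leading term of Bessel-function shape $C_{a,b,d}\cdot n^{-\alpha}\, I_\nu(\beta\sqrt{n})$, while the minor-arc and subdominant-cusp contributions are controlled by Kloosterman-sum estimates and contribute terms of strictly smaller exponential order. The signs claimed in Theorem \ref{ThmMod5} will then be read off from the signs of the explicit constants $C_{a,b,d}$.

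The main obstacle is the bookkeeping of signs. For each of the fourteen non-trivial cases one must pin down the cusp giving the dominant contribution and then carefully track the phases arising from the modular transformations of the eta-quotients, from the square-root singularities of the weight-$\tfrac12$ components, and from the $d$-dependent roots of unity produced by the $5$-dissection; it is the interaction of these phases with the residue class of $n$ modulo $5$ that is responsible for the change of sign predicted by the theorem. Once the leading term is shown to have the correct sign, making the error bound effective reduces the verification to a finite range of $n$, which is handled directly from a truncated power series expansion of the overpartition rank generating function and accounts for the few exceptions referred to in the statement.
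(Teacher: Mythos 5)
Your strategy is sound and would in principle yield the theorem, but it is a genuinely different route from the one the paper takes. You propose to $5$-dissect first, via the Lovejoy--Osburn generating functions for $\overline N(s,5,5n+d)-\overline N(t,5,5n+d)$, and then run the circle method separately on each of the fifteen resulting eta-quotient/Lambert-series expressions. The paper never touches those dissected generating functions for the inequalities: it uses the orthogonality relation \eqref{orthogonality} to write $\overline N(a,5,n)-\overline N(b,5,n)$ as a combination of $\mathcal O(\zeta_5^j;q)$ for $j=1,2$, and then invokes the asymptotic formula of Theorem \ref{MainThm} (already established in \cite{Ciolan}) for the coefficients $A\bigl(\tfrac j5;n\bigr)$. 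The dependence on $n\bmod 5$ --- which in your plan would emerge from tracking phases through the modular transformations of each dissected piece --- instead falls out of an explicit evaluation of the Kloosterman-type sums $B_{1,5,5}(-n,0)$ and $B_{2,5,5}(-n,0)$ using standard Dedekind-sum identities; the point is that for $c=5$ the three competing leading terms ($B_{1,5,5}$, $B_{2,5,5}$, $D_{1,5,1}$) all carry the same hyperbolic-sine argument $\pi\sqrt n/5$, so the sign is decided by summing their explicitly computed coefficients weighted by $\rho_j(a,b,5)$, after which an explicit error bound reduces everything to a check for $n\le 40$. The trade-off: the paper's route reuses machinery already built for Theorem \ref{ThmGeneralIneqs} and needs only two new Kloosterman-sum evaluations, whereas yours requires working out modular transformations (including completions of the mock-modular Appell--Lerch pieces) and a full major/minor-arc analysis for each of fifteen functions --- substantially more labor. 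One genuine advantage of your route is the $d=2$ identity $\overline N(0,5,n)=\overline N(2,5,n)$: the paper's asymptotic method cannot prove identities (it only shows the leading terms cancel) and must import this from Lovejoy--Osburn, eq.\ (12) of \cite[Theorem 1.2]{LO}, which is in effect exactly what you propose to do, so be aware that this step is a citation rather than something your circle-method framework produces.
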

The values of $ n  $ for which the inequalities stated in Theorem \ref{ThmMod5} do not hold are given in the following table, accompanied by the corresponding exceptions.\medskip\medskip
\begin{table}[h]
\begin{tabular}{|c|c|}\hline
$ n $ & Exceptions\\\hline\hline
$ 5,35$ & $ N(0,5,n)<N(1,5,n) $\\
$ 8,20,25,40$ & $ N(0,5,n)=N(1,5,n) $\\
$ 2$ & $ N(1,5,n)>N(2,5,n) $\\
$ 1,10,11,15,30$ & $ N(1,5,n)=N(2,5,n) $\\
$ 4,5,8,9,24$ & $ N(0,5,n)=N(2,5,n) $\\\hline
\end{tabular}\medskip
\caption{Exceptions for $ c=5 $}\label{TableMod5}
\end{table}
\begin{Thm}
\label{ThmMod2}
For  $ n\ge1  $ we have
\begin{align*}
\ol N(0,2,n)&>\ol N(1,2,n)~ \text{if~$n$~is odd},\\
\ol N(0,2,n)&<\ol N(1,2,n)~ \text{if~$n$ is even}.
\end{align*}
\end{Thm}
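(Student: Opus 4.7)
Write $D(n) := \overline{N}(0,2,n) - \overline{N}(1,2,n)$, so the theorem asserts $(-1)^{n+1}D(n) > 0$ for every $n\ge 1$. My strategy is to reduce the statement to the $c = 4$ case, which is treated independently in the present paper (see Theorem \ref{ThmMod4}). Using the standard symmetry $\overline{N}(a,c,n) = \overline{N}(c-a,c,n)$, which in particular gives $\overline{N}(3,4,n) = \overline{N}(1,4,n)$, one has
\[
D(n) \;=\; \overline{N}(0,4,n) + \overline{N}(2,4,n) - 2\overline{N}(1,4,n) \;=\; \overline{p}(n) - 4\overline{N}(1,4,n).
\]
Thus the theorem becomes a statement comparing $\overline{p}(n)$ to $4\overline{N}(1,4,n)$ according to the parity of $n$, and the generating function $\sum_n D(n)q^n = \overline{R}(-1,q)$ (where $\overline{R}(z,q)$ is the two-variable overpartition rank generating function) is precisely the combination of the relevant $c = 4$ rank generating functions at the involutive character $\zeta_4^2 = -1$.

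The first step is to substitute into the displayed formula the closed-form dissections of $\sum_n \overline{N}(a,4,4m+k)\,q^m$ for $a \in \{0,1,2\}$ and each $k \in \{0,1,2,3\}$ obtained during our analysis of $c = 4$. Each such generating function is an $\eta$-quotient (or a closely related $q$-product), and collecting terms yields four explicit generating functions $\sum_m D(4m+k)\,q^m$, one for each residue class $k \pmod{4}$.

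The second step is to determine the sign of every coefficient of each of these four generating functions. In the cases where the resulting $q$-product has a manifestly positive $q$-expansion --- either because it counts a natural combinatorial family, or because Jacobi's triple product identifies it with a theta series of definite sign --- the conclusion is immediate. Otherwise one pulls out an overall factor of $(-1)^{k+1}$ and argues that the remaining factor is a positive power series, typically by reading it off as the generating function for an overpartition-like object subject to an additional condition.

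The main obstacle lies in this last verification. In contrast to Theorems \ref{ThmMod5}--\ref{ThmMod4}, which tolerate a handful of exceptional $n$, Theorem \ref{ThmMod2} asserts strict inequality at \emph{every} $n \ge 1$ with no exceptions whatsoever. Consequently one must rule out the vanishing of any single coefficient in all four generating functions, and so the argument cannot rely on a loose coefficient bound. The delicate point is therefore to exhibit, for each residue class $k$, a presentation in which the sign of every individual coefficient is transparent; once that is in hand, the inequalities of the theorem drop out immediately, without any small-case check being needed.
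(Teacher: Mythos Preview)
Your plan is both more involved than necessary and, as written, circular. The paper never derives any four-dissections $\sum_m \overline N(a,4,4m+k)\,q^m$; its entire treatment of Theorem~\ref{ThmMod4} is the one-line remark that $\mathcal O(i;q)=1+2\sum_{n\ge1}q^{n^2}$, from which only the identity $\overline N(0,4,n)-\overline N(2,4,n)\in\{0,2\}$ follows directly. The four \emph{inequalities} in Theorem~\ref{ThmMod4} involving $\overline N(1,4,n)$ require, by orthogonality for $c=4$, knowing the sign of every coefficient of $\mathcal O(-1;q)$ --- and that is precisely the content of Theorem~\ref{ThmMod2}. So reducing the mod~$2$ statement to the mod~$4$ inequalities begs the question; and the alternative you gesture at (external CGS-type dissections) is not developed in the paper and would leave you with four separate sign analyses that you have not carried out.

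The paper's actual argument is direct and three lines long. One has $\overline N(0,2,n)-\overline N(1,2,n)=[q^n]\,\mathcal O(-1;q)$, so it suffices to show that $\mathcal O(-1;-q)$ has all negative coefficients past the constant term. Substituting $z=-1$ and $q\mapsto -q$ into Lovejoy's identity \eqref{O-1-q} yields
\[
\mathcal O(-1;-q)=1-2q-\sum_{n\ge1}\prod_{k=1}^{n}\frac{1+q^k}{1-q^k}\,(1+q)\,q^n,
\]
and the sum on the right manifestly has positive coefficients. No dissection, no residue-by-residue case split, and no risk of an individual coefficient vanishing --- the very obstacle you identified simply does not arise with this approach.
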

\begin{Thm}
\label{ThmMod3}
For  $ n\ge1  $ we have
\begin{align*}
\ol N(0,3,n)&>\ol N(1,3,n)~ \text{if~}n\equiv 0,1\pmod*3,\\
\ol N(0,3,n)&<\ol N(1,3,n)~ \text{if~}n\equiv 2\phantom{,1}\pmod*3.
\end{align*}
\end{Thm}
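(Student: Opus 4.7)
The plan is to reduce the theorem to showing positivity (respectively negativity) of the coefficients of three explicit $q$-series. Setting $\zeta_3 = e^{2\pi i/3}$ and using the symmetry $\ol N(m,n) = \ol N(-m,n)$ together with $1 + \zeta_3 + \zeta_3^2 = 0$, one checks that
\[
\sum_{n \ge 0} \bigl( \ol N(0,3,n) - \ol N(1,3,n) \bigr) q^n \;=\; \ol R(\zeta_3; q),
\]
where $\ol R(w;q) = \sum_{m,n} \ol N(m,n)\, w^m q^n$ is the overpartition rank generating function. Dissecting by the residue $d$ of the exponent modulo $3$, the theorem becomes the assertion that, for each $d \in \{0,1,2\}$ and every $n \ge 1$, the coefficient of $q^{3n+d}$ in $\ol R(\zeta_3;q)$ is strictly positive when $d \in \{0,1\}$ and strictly negative when $d = 2$.

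The first concrete step is to invoke the $3$-dissection formulas of Lovejoy and Osburn \cite{LO}, which express each of the three components as a combination of eta-quotients and generalized Lambert series. My next step is to manipulate these expressions — with Jacobi's triple product identity, the quintuple product identity, and standard $q$-series transformations — into a form whose sign is manifest, for example an infinite product with positive coefficients multiplied by a controlled polynomial factor, or a Lambert series whose summands can be compared termwise. This is the same philosophy as that used by Bringmann to settle the partition-rank analogue \eqref{AndrewsLewisIneqs}, which gives some confidence that the dissection components for $c=3$ are simple enough to admit such a rewriting.

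Since the theorem claims no exceptions for $n \ge 1$, a purely asymptotic argument is not enough on its own. The main obstacle I anticipate is producing a sign-definite rewriting uniformly in $n$: the Lovejoy--Osburn expressions contain both positive and negative contributions, and any cancellation must be made explicit. If an algebraic proof of uniform positivity proves elusive, a fallback is to combine a Hardy--Ramanujan--Rademacher circle-method analysis (controlling the sign for $n$ sufficiently large) with a computer-assisted verification of the finitely many remaining cases, in the spirit of the author's earlier work \cite{Ciolan} for $c = 6$. In any event, the small-$n$ coefficients of $\ol R(\zeta_3;q)$ should be checked directly to confirm that the inequalities already hold from $n=1$ onward, without exceptions.
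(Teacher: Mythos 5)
Your reduction is correct: by $\ol N(m,n)=\ol N(-m,n)$ and orthogonality, $\sum_{n\ge0}\bigl(\ol N(0,3,n)-\ol N(1,3,n)\bigr)q^n=\O(\zeta_3;q)$, which is exactly \eqref{orthogonality} specialized to $c=3$. The problem is that your primary route --- massaging the Lovejoy--Osburn $3$-dissection \cite{LO} into a manifestly sign-definite form via triple/quintuple product identities --- is left entirely as a hope, and the precedent you cite for it does not support it: Bringmann's proof of \eqref{AndrewsLewisIneqs} in \cite{B} is an asymptotic, circle-method argument (hence the title ``Asymptotics for rank partition functions''), not an algebraic rewriting of Atkin--Swinnerton-Dyer-type dissection components. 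The $\ell=3$ components in \cite{LO} mix eta-quotients with generalized Lambert series whose coefficients are not visibly one-signed; you acknowledge this as ``the main obstacle'' but offer no way past it. As written, the central step of your main branch is missing.

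Your fallback is in fact the route the paper takes, but you have not developed it beyond a sentence. Concretely: for $c=3$ the $D$-sum of Theorem \ref{MainThm} contributes nothing to the main term, since for every admissible $k$ one has $c_1=3$ and $\ell\in\{1,2\}$, so $\frac{\ell}{c_1}\in\bigl(\frac14,\frac34\bigr]$ and $\delta_{3,k,r}=0$; the dominant term is therefore the $k=3$ term of the $B$-sum, with argument $\sinh\bigl(\frac{\pi\sqrt n}{3}\bigr)$, and the sign change comes entirely from the explicit evaluation of the Kloosterman-type coefficient $B_{1,3,3}(-n,0)$ as a function of $n\bmod 3$. The paper records this as the (corrected) asymptotic $A\bigl(\frac13;n\bigr)\sim\frac{2}{3\sqrt n}\tan\bigl(\frac{\pi}{3}\bigr)\sinh\bigl(\frac{\pi\sqrt n}{3}\bigr)$ for $n\equiv0,1\pmod*3$ and $\sim-\frac{4}{3\sqrt n}\tan\bigl(\frac{\pi}{3}\bigr)\sinh\bigl(\frac{\pi\sqrt n}{3}\bigr)$ for $n\equiv2\pmod*3$, from which Theorem \ref{ThmMod3} follows together with the explicit error bounds and a finite check of small $n$. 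To turn your fallback into a proof you would still need that Kloosterman evaluation, the error analysis, and the verification that no exceptional $n$ occur --- none of which your proposal supplies.
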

By computing rank differences for $ \ell=4 $ and $ \l=8 $, Cui, Gu and Su \cite{CGS} established very recently\footnote{The author only became aware of  \cite{CGS} after the completion of the present paper, and shortly before its submission. The overlapping results are, therefore, to be seen as independent of one another.} a few identities and inequalities, see Theorems 1.2--1.5 in \cite{CGS}, most of which also follow from Theorem \ref{ThmGeneralIneqs} and the next result. However, while Theorem 1.5 in \cite{CGS} gives several inequalities that hold modulo 8 for $ n $ in certain residue classes, it does not capture the full behavior of the inequalities.  This is answered by Theorem \ref{ThmGeneralIneqs} of the current paper, applied to the case $ c=8. $  
\begin{Thm}\label{ThmMod4}
For $ n\ge1  $ we have 
\begin{align*}
\ol N(0,4,n)&>\ol N(1,4,n)~ \text{if~$n$~is odd},\\
\ol N(0,4,n)&<\ol N(1,4,n)~\text{if~$n$~is even}, \\
\ol N(1,4,n)&<\ol N(2,4,n)~ \text{if~$n$~is odd},\\
\ol N(1,4,n)&>\ol N(2,4,n)~ \text{if~$n$~is even},\\
\end{align*}and
\[\ol N(0,4,n)-\ol N(2,4,n)=\begin{cases} 0 & \text{if~$n$ is not a square},\\
2 & \text{if~$n$ is  a square.}\end{cases} 
\]
\end{Thm}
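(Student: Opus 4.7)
My plan is to reduce Theorem~\ref{ThmMod4} to Theorem~\ref{ThmMod2} together with a single $q$-series identity for the overpartition rank generating function at a fourth root of unity. Let $\ol R(z;q) := \sum_{n,m} \ol N(m,n) z^m q^n$. Applying orthogonality of characters on $\Z/4\Z$ and using the rank symmetry $\ol N(m,n) = \ol N(-m,n)$---which makes $\ol R(z;q)$ invariant under $z \mapsto z^{-1}$, so that $\ol R(-i;q) = \ol R(i;q)$---a short computation gives
\begin{align*}
\ol N(0,4,n) - \ol N(2,4,n) &= [q^n]\,\ol R(i;q),\\
\ol N(0,4,n) - \ol N(1,4,n) &= \tfrac{1}{2}\bigl([q^n]\,\ol R(i;q) + [q^n]\,\ol R(-1;q)\bigr),\\
\ol N(2,4,n) - \ol N(1,4,n) &= \tfrac{1}{2}\bigl([q^n]\,\ol R(-1;q) - [q^n]\,\ol R(i;q)\bigr).
\end{align*}
The same orthogonality at $c=2$ identifies $[q^n]\,\ol R(-1;q) = \ol N(0,2,n) - \ol N(1,2,n)$, whose sign is fixed by Theorem~\ref{ThmMod2}.

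The technical core is the theta identity
\[
\ol R(i;q) = \varphi(q) := \sum_{n \in \Z} q^{n^2}.
\]
Starting from the Hecke-type series $\ol R(z;q) = \sum_{n \ge 0} \frac{(-1;q)_n\, q^{n(n+1)/2}}{(zq;q)_n (z^{-1}q;q)_n}$ and exploiting the collapse $(iq;q)_n(-iq;q)_n = (-q^2;q^2)_n$, the specialization $z = i$ becomes
\[
\ol R(i;q) = \sum_{n \ge 0} \frac{(-1;q)_n\, q^{n(n+1)/2}}{(-q^2;q^2)_n},
\]
which I would identify with $\varphi(q)$ via a Bailey-pair substitution, or a direct transformation in the spirit of Watson's $q$-analogue of Whipple's theorem. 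Extracting the coefficient of $q^n$ from $\varphi(q) = 1 + 2\sum_{n \ge 1} q^{n^2}$ then yields the closed-form square identity stated in the theorem.

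The four inequalities follow formally. For $n$ odd, $[q^n]\,\ol R(-1;q) > 0$ by Theorem~\ref{ThmMod2} and $[q^n]\,\ol R(i;q) \in \{0,2\}$ by the identity, so both $\ol N(0,4,n) - \ol N(1,4,n)$ and $\ol N(2,4,n) - \ol N(1,4,n)$ are nonnegative; strict positivity holds outside the boundary case $n = 1$, where $[q^1]\,\ol R(-1;q) = [q^1]\,\ol R(i;q) = 2$ forces $\ol N(1,4,1) = \ol N(2,4,1) = 0$ and must be listed as an exception. For $n$ even, $[q^n]\,\ol R(-1;q) < 0$ and one checks that $|[q^n]\,\ol R(-1;q)| > 2$ for all even $n \ge 2$ (verified case by case for small $n$ and, for $n$ large, from the asymptotic growth of the mod~$2$ rank difference), so both differences become negative with the signs required. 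Remaining small exceptions are dispatched by direct computation.

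The main obstacle is the identity $\ol R(i;q) = \varphi(q)$; once this is in hand, every other assertion of the theorem is routine bookkeeping on top of Theorem~\ref{ThmMod2}. The statement is strikingly clean---$\sum_m \ol N(m,n) i^m$ equals $2$ when $n$ is a positive square and vanishes otherwise---so a combinatorial sign-reversing involution proof would be appealing, but I expect an analytic $q$-series manipulation to be the most reliable path.
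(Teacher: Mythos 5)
Your proposal is correct and takes essentially the same route as the paper: the paper likewise reduces everything to the identity $\O(i;q)=1+2\sum_{n\ge1}q^{n^2}$ (which it simply cites from the proof of Lovejoy's Theorem 5.6, so you need not supply the Bailey-pair derivation) combined with the sign information on the coefficients of $\O(-1;q)$ coming from Theorem \ref{ThmMod2}; the only justification you should tighten is the lower bound $|[q^n]\O(-1;q)|>2$ for even $n\ge2$, which does not follow from ``asymptotic growth'' (these coefficients are only of order $O(n^{\varepsilon})$) but is immediate from the positive-coefficient expansion in the paper's Lemma, which yields $|[q^n]\O(-1;q)|\ge4$ for all $n\ge2$. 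Your observation that $n=1$ is a genuine exception to the strict inequality $\ol N(1,4,n)<\ol N(2,4,n)$ (both sides vanish there, since every overpartition of $1$ has rank $0$) is correct and is overlooked in the statement as printed.
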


\begin{rem}
The number $ n_{a,b,c} $ depends only on $ a,b,c $ and can be found after a finite computation. We will make this precise in Section \ref{Proof}. 
\end{rem}
\begin{rem}
The inequalities from Theorems \ref{ThmGeneralIneqs}, \ref{ThmMod5} and \ref{ThmMod3} also shed light on the signs of the coefficients of the rank differences found for $ \ell\in\{3,4,5,7,8\} $ in \cite{CGS}, \cite{CJS} and \cite{LO}. A study of rank inequalities based on $ q $-series  expansions  was done in \cite{JZZ} for $\l\in\{6,10\}, $ but this was only possible in the case of some fairly simple  expressions for which  it is not difficult to conclude, say, that the coefficients are all positive (see, e.g., the proof of \cite[Theorem 1.4]{JZZ}). However, this is also the reason for which other inequalities cannot be proven with that approach. For $ \l\in\{3,4,5,7,8\}, $ the rank differences are written as sums of various quotients of infinite products, and the sign of the coefficients of these rather complicated $ q $-series expansions (see Theorems 1.3--1.4 of \cite{CGS}, Theorem 1.1 of \cite{CJS}, or Theorems 1.1--1.2 in \cite{LO}) can generally not be guessed a priori.
\begin{rem}
The identity $ \ol N(0,5,n)=\ol N(2,5,n) $ for $ n\equiv2\pmod*5 $ does not follow from our theorem, but it was already proven by Lovejoy and Osburn, see eq. (12) from \cite[Theorem 1.2]{LO}.  Nevertheless, we include it here for completeness.
\end{rem} 
\end{rem}
\begin{rem}
There is no way to deduce any of the inequalities listed in Theorem \ref{ThmMod5} simply by adding, say, the inequalities between ranks modulo 10 obtained in \cite[Theorem 2]{Ciolan}. In fact, this is impossible in general, precisely because of \eqref{GenericIneq} and the fact that $ \ol N(a,c,n)=\ol N(c-a,c,n). $ 
\end{rem}
\begin{rem}
Theorem \ref{ThmMod2} shows that the sign of the rank difference $ \ol N(0,2,n)-\ol N(1,2,n) $ alternates with the parity of $ n. $  A similar result holds (see \cite{Ciolan2}) for $ p_r(0,2,n)-p_r(1,2,n), $ where $ p_r(a,m,n) $ denotes the number of partitions of $ n $ into $ r $-th powers that have a number of parts congruent to $ a $ modulo $ m. $
\end{rem}
\subsection{Overview} Our work relies heavily on the results established in \cite{Ciolan}, and the general approach  that we follow is, to some extent, similar to what was done in the case of partition ranks by Bringmann and Kane \cite{BK}. Therefore, although the current paper is self-contained and may be read independently, the reader is warmly invited to consult these two references. \par The paper is structured as follows. In Section \ref{Strategy} we describe the main ideas of the proof of Theorem \ref{ThmGeneralIneqs}, which we give in Section \ref{Proof}, together with the proofs of Theorems \ref{ThmMod5}--\ref{ThmMod4}. In order to explain the general strategy, we need to introduce some notation  and recall the results of \cite{Ciolan},  and this we do in Section \ref{Preliminaries}.

\section{Preliminaries}\label{Preliminaries}
\subsection{Rank generating functions}\label{RankGF} Before being able to fully explain our approach, we need a few preparatory steps. We begin by recalling that, if $ q=e^{2\pi iz}, $ with $ z\in\C$ and $\IM(z)>0, $  the overpartition generating function (see, e.g., \cite{CL}) is given by 
\begin{equation*}
\overline P(q):=\sum_{n\ge 0}\overline{p}(n)q^n=\frac{\eta(2z)}{\eta^2(z)}=\prod_{n=1}^{\infty}\frac{1+q^n}{1-q^n}=1+2q+4q^2+8q^3+14q^4+\cdots,
\end{equation*}
where $$ \eta(z):=q^{\frac1{24}}\prod_{n=1}^{\infty}(1-q^n) $$ stands, as usual, for the Dedekind eta function. Further, we know from \cite{Lovejoy} that
\begin{align*}
	\O(u;q):=\sum_{n=0}^{\infty}\sum_{m=-\infty}^{\infty}\overline{N}(m,n)u^mq^n&=\sum_{n=1}^{\infty}\frac{(-1)_nq^{\frac12n(n+1)}}{(uq,q/u)_n}\nonumber\\
	&=\frac{(-q)_{\infty}}{(q)_{\infty}}\left(1+2\sum_{n\ge1}\frac{(1-u)(1-u^{-1})(-1)^nq^{n^2+n}}{(1-uq^n)(1-u^{-1}q^n)} \right),
\end{align*}
where for $ a,b\in\C $ and $ n\in\N\cup\{\infty\} $ we use the $ q $-Pochhammer symbols
\begin{align*}
(a)_n&:=\prod_{r=0}^{n-1}(1-aq^r),\\
(a,b)_n&:=\prod_{r=0}^{n-1}(1-aq^r)(1-bq^r).
\end{align*}
\par For $ 0<a<c $ coprime positive integers and $ \zeta_n=e^{\frac{2\pi i}{n}} $ the standard primitive $ n $-th root of unity, let
\begin{equation}\label{OA}
\cal O \left( \frac ac;q\right) :=\cal O \left(\zeta_c^a;q \right)=1+\sum_{n=1}^{\infty} A\left(\frac ac;n \right)q^n.
\end{equation}
\par The letters $ h $ and $ k $ will denote throughout coprime positive integers, with $ 0\le h<k. $ If $ k=1, $ we set $ h=0, $ this being the only instance when $ h=0 $ is allowed. Let $  \widetilde{k}=0 $ if $k$ is even, and $ \widetilde{k}=1 $ if $ k $ is odd, and put $ k_1=\frac{k}{(c,k)}, $ $ c_1=\frac c{(c,k)}. $ Further, let $ 0\le \ell <c_1 $ and $ h'\in\mathbb Z $ be given by the congruences $ \ell\equiv ak_1\pmod*{c_1}, $ respectively  $ hh'\equiv-1\pmod* k. $

\subsection{Kloosterman sums}  Recall that if \[((x)):=\begin{cases}
x-\lfloor x\rfloor-\frac12 & \text{if~}x\in\R\setminus\Z,\\
0 & \text{if~}x\in\Z,
\end{cases}\] and 
\[S_{h,k}:=\sum_{\mu=0}^{k-1}\left( \left( \frac{\mu}{k}\right) \right)\left( \left(\frac{h\mu}{k} \right) \right)\] is the  so-called \textit{Dedekind sum},
then \[\omega_{h,k}:=\exp( \pi i S_{h,k}  ) \]
is the multiplier that appears in the transformation of the partition function. 
\par In what follows, we will make use of several  Kloosterman-type sums, which we define below. Here and throughout, by $ \sum'_h $ we always indicate summation over the integers  $ 0\le h<k $ that are coprime to $ k. $ 
\par If $ c\mid k, $ let
\begin{equation*}
A_{a,c,k}(n,m):=(-1)^{k_1+1}\tan\left(\frac{\pi a}{c} \right) \primesum\sum_h\frac{\omega^2_{h,k}}{\omega_{h,k/2}}\cdot \cot\left(\frac{\pi ah'}{c} \right) \cdot e^{-\frac{2\pi ih'a^2k_1}{c}}\cdot e^{\frac{2\pi i}{k}(nh+mh')}, 
\end{equation*} and
\begin{equation*}
B_{a,c,k}(n,m):=-\frac{1}{\sqrt2}\tan\left(\frac{\pi a}{c} \right) \primesum\sum_h\frac{\omega^2_{h,k}}{\omega_{2h,k}}\cdot \frac{1}{\sin\left(\frac{\pi ah'}{c} \right)}\cdot e^{-\frac{2\pi ih'a^2k_1}{c}}\cdot e^{\frac{2\pi i}{k}(nh+mh')}.
\end{equation*} 
\par If $ c\nmid k $ and $ 0<\frac{\ell}{c_1}\le\frac14,$   let 
\begin{equation*}
D_{a,c,k}(n,m):=\frac1{\sqrt2}\tan\left(\frac{\pi a}{c} \right)\primesum\sum_h\frac{\omega^2_{h,k}}{\omega_{2h,k}}\cdot  e^{\frac{2\pi i}{k}(nh+mh')},
\end{equation*} 
and if $ c\nmid k $ and $ \frac34<\frac{\ell}{c_1}<1,$ let 
\begin{equation*}
D_{a,c,k}(n,m):=-\frac1{\sqrt2}\tan\left(\frac{\pi a}{c} \right)\primesum\sum_h\frac{\omega^2_{h,k}}{\omega_{2h,k}}\cdot  e^{\frac{2\pi i}{k}(nh+mh')}.
\end{equation*}
\par Finally, if $ c\nmid k, $ set
\begin{equation*}
\delta_{c,k,r}:=\begin{cases}
 \frac{1}{16}-\frac{ \ell}{2c_1}+\frac{\ell^2}{c_1^2}-r\frac{\ell}{c_1} &\text{{if~}}0<\frac{\ell}{c_1}\le\frac14,\\
0 & \text{if~}\frac14<\frac{\ell}{c_1}\le\frac34,\\
 \frac{1}{16}-\frac{3\ell}{2c_1}+\frac{\ell^2}{c_1^2}+\frac12-r\left(1-\frac{\ell}{c_1} \right)  & \text{if~}\frac34<\frac{\ell}{c_1}<1,
\end{cases}
\end{equation*}
%
and
\begin{equation*}
m_{a,c,k,r}:=\begin{cases}
-\frac{1}{2c_1^2}( 2(ak_1-\l)^2+c_1(ak_1-\ell) +2rc_1(ak_1-\ell)) &\text{{if~}}0<\frac{\ell}{c_1}\le\frac14,\\
0 & \text{if~}\frac14<\frac{\ell}{c_1}\le\frac34,\\
-\frac{1}{2c_1^2}( 2(ak_1-\l)^2 +3c_1(ak_1-\ell )-2rc_1(ak_1-\l)-c_1^2(2r-1)) & \text{if~}\frac34<\frac{\ell}{c_1}<1.
\end{cases}
\end{equation*}

\subsection{Modular transformations} If $ \frac bc\in(0,1)\setminus\left\{\frac12\right\}, $  define
\[s(b,c):=\begin{cases}
0 & \text{if~}0<\frac bc\le\frac14,\\
1 & \text{if~}\frac14<\frac bc\le\frac{3}{4},\\
2 & \text{if~}\frac{3}{4}<\frac bc<1,
\end{cases}
\quad\text{and}\quad
t(b,c):=\begin{cases}
1 & \text{if~}0<\frac bc<\frac12,\\
3 & \text{if~}\frac12<\frac bc<1.
\end{cases}
\]
For reasons of space, we will write $ s=s(b,c) $ and $ t=t(b,c). $ If $ 0<a<c $ are coprime with $ c>2, $  let
\begin{align*}\U\left(\frac ac;q \right)=\U\left(\frac ac;z \right)&:=\frac{\eta\left(\frac z2 \right) }{\eta^2(z)}\sin\left(\frac{\pi a}{c} \right)\sum_{n\in\Z}\frac{(1+q^n)q^{n^2+\frac n2}}{1-2\cos\left(\frac{2\pi a}{c} \right)q^n+q^{2n} },   \\
\U(a,b,c;q)=\U(a,b,c;z)&:=\frac{\eta\left(\frac z2 \right) }{\eta^2(z)} e^{\frac{\pi ia}{c}\left(\frac{4b}{c}-1-2s \right)}q^{\frac{sb}{c}+\frac{b}{2c}-\frac{b^2}{c^2}}\sum_{m\in\Z}\frac{q^{\frac{m}{2}(2m+1)+ms}}{1-e^{-\frac{2\pi ia}{c}}q^{m+\frac bc}},\\
\cal V\left(a,b,c; q\right)=\V(a,b,c;z)&:=\frac{\eta\left(\frac z2 \right) }{\eta^2(z)}e^{\frac{\pi i a}{c}\left(\frac{4b}{c}-1-2s \right) }q^{\frac{sb}{c}+\frac{b}{2c}-\frac{b^2}{c^2}}\sum_{m\in\bb Z}\frac{q^{\frac{m(2m+1)}{2}+ms}\left( 1+e^{-\frac{2\pi i a}{c}}q^{m+\frac bc}\right)  }{1-e^{-\frac{2\pi i a}{c}}q^{m+\frac bc}},\\
\cal O\left(a,b,c; q\right)=\O(a,b,c;z)&:=\frac{\eta(2z)}{\eta^2(z)}e^{\frac{\pi i a}{c}\left(\frac{4b}{c}-1-t \right) }q^{\frac{tb}{2c}+\frac b{2c}-\frac{b^2}{c^2}}\sum_{m\in\Z}(-1)^m\frac{q^{\frac{m}{2}(2m+1)+\frac{mt}{2}} }{1-e^{-\frac{2\pi i a}{c}}q^{m+\frac bc}},  \\
\V\left(\frac ac; q\right)=\V\left(\frac ac;z \right)&:= \frac{\eta(2z)}{\eta^2(z)}q^{\frac14}\sum_{m\in\Z}\frac{q^{m^2+m}\left( 1+e^{-\frac{2\pi ia}{c}}q^{m+\frac12}\right) }{1-e^{-\frac{2\pi ia}{c}}q^{m+\frac12}},\end{align*}
and consider the Mordell-type integral 
\[I_{a,c,k,\nu}:=\int_{\R}e^{-\frac{2\pi zx^2}{k}}H_{a,c}\left( \frac{2\pi i\nu}{k}-\frac{2\pi zx}{k}-\frac{\widetilde{k}\pi i}{2k} \right) dx, \] where $\nu\in\mathbb Z,$ $ k\in\bb N $ and $ \widetilde k $ are as defined in Section \ref{RankGF}, and \begin{equation*}
H_{a,c}(x):=\frac{e^x}{1-2\cos\left(\frac{2\pi a}{c} \right)e^x+e^{2x} }.
\end{equation*}
Using Poisson summation, Bringmann and Lovejoy \cite{BJoy} proved the following transformation laws.
\begin{Thm}[{\cite{BJoy}}]\label{Transformations}
Assume the notation above and let $ q=e^{\frac{2\pi i}{k}(h+iz)} $ and $ q_1=e^{\frac{2\pi i}{k}\left( h'+\frac iz\right) }, $ with $ z\in\C  $ and $ \RE (z)>0. $ 
\begin{enumerate}[{\rm(1)}]
\item If $ c\mid k $ and $2\mid k, $  then 
\begin{multline*}\O\left(\frac ac;q \right)=(-1)^{k_1+1}i\cdot e^{-\frac{2\pi a^2h'k_1}{c}}\cdot\tan\left(\frac{\pi a}{c} \right)\cdot\cot\left( \frac{\pi ah'}{c}\right)\frac{\omega^2_{h,k}}{\omega_{h,k/2}}z^{-\frac12}\cdot \O\left(\frac{ah'}{c};q_1 \right) \\ +  \frac{4\sin^2\left(\frac{\pi a}{c}\right)\cdot\omega^2_{h,k}}{\omega_{h,k/2}\cdot k}z^{-\frac12}\sum_{\nu=0}^{k-1}(-1)^{\nu}e^{-\frac{2\pi ih'\nu^2}{k}}\cdot I_{a,c,k,\nu}(z) .\end{multline*}
\item If $ c\mid k $ and $ 2\nmid k, $  then 
\begin{multline*}\O\left(\frac ac;q \right)=-\sqrt 2i\cdot e^{\frac{\pi ih'}{8k}-\frac{2\pi i a^2h'k_1}{c}}\cdot\tan\left(\frac{\pi a}{c} \right)\frac{\omega^2_{h,k}}{\omega_{2h,k}}z^{-\frac12}\cdot \U\left(\frac{ah'}{c};q_1 \right) \\ +  \frac{4\sqrt2\sin^2\left(\frac{\pi a}{c}\right)\cdot\omega^2_{h,k}}{\omega_{2h,k}\cdot k}z^{-\frac12}\sum_{\nu=0}^{k-1}e^{-\frac{\pi ih'}{k}(2\nu^2-\nu)}\cdot I_{a,c,k,\nu}(z) .\end{multline*}
\item If $ c\nmid k, $ $ 2\mid k $  and $ c_1\ne 2, $ then  
\begin{multline*}\O\left(\frac ac;q \right)=2 e^{-\frac{2\pi ia^2h'k_1}{c_1c}}\cdot\tan\left(\frac{\pi a}{c} \right)\frac{\omega^2_{h,k}}{\omega_{h,k/2}}z^{-\frac12}\cdot (-1)^{c_1(\ell+k_1)}\cdot \O\left(ah',\frac{\ell c}{c_1},c;q_1 \right) \\ +  \frac{4\sin^2\left(\frac{\pi a}{c}\right)\cdot\omega^2_{h,k}}{\omega_{h,k/2}\cdot k} z^{\frac12}\sum_{\nu=0}^{k-1}(-1)^{\nu}e^{-\frac{2\pi ih'\nu^2}{k}}\cdot I_{a,c,k,\nu}(z) .\end{multline*}
\item If $ c\nmid k, $ $ 2\mid k $  and $ c_1=2, $ then 
\begin{multline*}\O\left(\frac ac;q \right)= e^{-\frac{\pi ia^2h'k_1}{c}}\cdot\tan\left(\frac{\pi a}{c} \right)\frac{\omega^2_{h,k}}{\omega_{h,k/2}\cdot k}z^{-\frac12}\cdot \V\left(\frac{ah'}{c};q_1 \right) \\ +  \frac{4\sin^2\left(\frac{\pi a}{c}\right)\cdot\omega^2_{h,k}}{\omega_{h,k/2}\cdot k} z^{\frac12}\sum_{\nu=0}^{k-1}(-1)^{\nu}e^{-\frac{2\pi ih'\nu^2}{k}}\cdot I_{a,c,k,\nu}(z) .\end{multline*}
\item If $ c\nmid k, $ $ 2\nmid k $  and $ c_1\ne 4, $ then 
\begin{multline*}\O\left(\frac ac;q \right)=\sqrt 2 e^{\frac{\pi ih'}{8k}-\frac{2\pi i a^2h'k_1}{c_1c}}\cdot\tan\left(\frac{\pi a}{c} \right)\frac{\omega^2_{h,k}}{\omega_{2h,k}}z^{-\frac12}\cdot \U\left(ah',\frac{\ell c}{c_1},c;q_1 \right) \\ +  \frac{4\sqrt2\sin^2\left(\frac{\pi a}{c}\right)\cdot\omega^2_{h,k}}{\omega_{2h,k}\cdot k} z^{\frac12}\sum_{\nu=0}^{k-1}e^{-\frac{\pi ih'}{k}(2\nu^2-\nu)}\cdot I_{a,c,k,\nu}(z) .\end{multline*}
\item If $ c\nmid k, $ $ 2\nmid k $  and $ c_1= 4, $ then 
\begin{multline*}\O\left(\frac ac;q \right)= e^{\frac{\pi ih'}{8k}-\frac{2\pi i a^2h'k_1}{c_1c}}\cdot\tan\left(\frac{\pi a}{c} \right)\frac{\omega^2_{h,k}}{\sqrt2\cdot\omega_{2h,k}}z^{-\frac12}\cdot \V\left(ah',\frac{\ell c}{c_1},c;q_1 \right) \\ +  \frac{4\sqrt2\sin^2\left(\frac{\pi a}{c}\right)\cdot\omega^2_{h,k}}{\omega_{2h,k}\cdot k} z^{\frac12}\sum_{\nu=0}^{k-1}e^{-\frac{\pi ih'}{k}(2\nu^2-\nu)}\cdot I_{a,c,k,\nu}(z) .\end{multline*}
\end{enumerate}
\end{Thm}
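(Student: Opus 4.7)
The plan is to follow the Poisson-summation strategy that has become standard for rank-type generating functions evaluated at roots of unity. Starting from Lovejoy's Hecke-type representation of $\O(u;q)$, I would first invoke the elementary identity
\[
\frac{(1-u)(1-u^{-1})}{(1-uq^n)(1-u^{-1}q^n)} = \frac{2 - u - u^{-1}}{1 - (u+u^{-1})q^n + q^{2n}} = 4\sin^2\!\left(\frac{\pi a}{c}\right) q^{-n} H_{a,c}(x)
\]
(with $u = \zeta_c^a$ and $e^{x} = q^n$) to rewrite the inner sum of $\O(u;q)$ as a Hecke-type series $\sum_{n\in\Z}(-1)^n q^{n^2+n/2}\,H_{a,c}(\cdots)$. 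This simultaneously introduces the kernel $H_{a,c}$ appearing in the Mordell integrals $I_{a,c,k,\nu}$ and the prefactor $4\sin^2(\pi a/c)$ that decorates the integral part of every identity of the theorem. The eta-quotient $\eta(2z)/\eta^2(z)$ is handled separately via the classical transformation of $\eta$, which is where the multipliers $\omega_{h,k}$ enter.

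Next, I would substitute $z \mapsto (h+iz)/k$, split the sum over $n$ into the $k$ residue classes $n \equiv \nu \pmod k$, and apply Poisson summation in each class. The quadratic factor in $q^{n^2}$ makes every Fourier integral absolutely convergent and produces the $z^{\pm 1/2}$ prefactor. For each class, Poisson summation returns a dual sum in a variable $m$ which, once one identifies the pole structure at $u = \zeta_c^a$ (whose location on the dual lattice is recorded by $\ell = ak_1 \bmod c_1$), recombines into exactly one of $\O$, $\U$, $\V$, $\O(\cdot,\cdot,c;q_1)$, $\U(\cdot,\cdot,c;q_1)$ or $\V(\cdot,\cdot,c;q_1)$ evaluated at $q_1$; the remainder is exactly $I_{a,c,k,\nu}(z)$.

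The six-fold case split is governed by two binary dichotomies and a fine sub-split. The parity of $k$ controls the eta transformation: when $2 \mid k$ one recovers $\eta(2z)/\eta^2(z)$ (hence the $\O$-prefactor on the right), while for $2 \nmid k$ the level-$2$ structure forces $\eta(2z) \mapsto \eta(z/2)$ (hence the $\U$-prefactor). The divisibility $c \mid k$ controls whether the root-of-unity pole $\zeta_c^a$ sits on the dual lattice: if it does, the main term is the one-parameter $\O$ or $\U$ at $ah'/c$; if it does not, one instead sees the three-parameter $\O(\cdot,\cdot,c;q_1)$ or $\U(\cdot,\cdot,c;q_1)$ with middle parameter forced to $\ell c/c_1$. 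The subcases $c_1 = 2$ and $c_1 = 4$ correspond to $\ell/c_1 = \tfrac12$, the symmetric position in which the dual sum must be folded about its centre, producing the even combination $\V(\cdot,\cdot,c;q_1)$ rather than $\O$ or $\U$.

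The main obstacle will be the bookkeeping of phases. One must combine the Dedekind-sum multiplier $\omega_{h,k}$ from the eta factor, the quadratic Gauss-type phases $e^{-2\pi i h' \nu^2/k}$ (respectively $e^{-\pi i h'(2\nu^2 - \nu)/k}$ when $k$ is odd) arising from Poisson summation in each residue class, and the additional phases $e^{-2\pi i h' a^2 k_1/c}$ produced by shifting the pole at $\zeta_c^a$ under $h \mapsto h'$. Showing that all of these collapse into the precise exponents displayed in the six identities requires Dedekind reciprocity together with a parity argument to match the $\cot(\pi ah'/c)$ and $1/\sin(\pi ah'/c)$ factors. This phase-matching step is where a proof is most delicate; the rest is a routine, if lengthy, application of the circle-method formalism.
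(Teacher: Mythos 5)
The first thing to say is that the paper does not prove this statement at all: Theorem \ref{Transformations} is imported wholesale from Bringmann and Lovejoy \cite{BJoy}, and the only comment the paper makes on its proof is the one-line attribution ``Using Poisson summation, Bringmann and Lovejoy proved the following transformation laws.'' So there is no in-paper argument to measure your proposal against; it can only be compared with the original reference. On its own terms, your outline does identify the correct strategy, and your opening identity
\[
\frac{(1-u)(1-u^{-1})}{(1-uq^n)(1-u^{-1}q^n)}=4\sin^2\left(\frac{\pi a}{c}\right)q^{-n}H_{a,c}(x),\qquad u=\zeta_c^a,\ e^x=q^n,
\]
is correct and genuinely explains both the kernel of the Mordell integrals $I_{a,c,k,\nu}$ and the factor $4\sin^2(\pi a/c)$ decorating the integral terms. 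Your structural reading of the case split also matches the shape of the six identities: the parity of $k$ is what decides between the multipliers $\omega_{h,k/2}$ and $\omega_{2h,k}$ and between output functions with prefactor $\eta(2z)/\eta^2(z)$ (the $\O$- and $\V(\frac{ah'}{c};\cdot)$-type) versus $\eta(z/2)/\eta^2(z)$ (the $\U$- and $\V(ah',\cdot,c;\cdot)$-type), while $c\mid k$ versus $c\nmid k$ decides whether the dual sum closes into a one-parameter or a three-parameter function.

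However, what you have written is a road map rather than a proof. Every step that makes the theorem nontrivial --- the actual Poisson summation over the residue classes $n\equiv\nu\pmod{k}$, the residue computation that produces $\cot(\pi ah'/c)$ in case (1) but $1/\sin(\pi ah'/c)$ in case (2), the Gauss-sum evaluations behind the phases $e^{-2\pi ih'\nu^2/k}$ and $e^{-\pi ih'(2\nu^2-\nu)/k}$, the derivation of the exponents $e^{-2\pi ia^2h'k_1/c}$ and $e^{\pi ih'/(8k)}$, and the verification that the leftover integrals are exactly $I_{a,c,k,\nu}$ --- is flagged as ``delicate bookkeeping'' but not carried out, and not one of the six displayed identities is actually derived. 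There is also a concrete inaccuracy in your description of the exceptional subcases: they are not both at $\ell/c_1=\frac12$. For $c_1=2$ coprimality does force $\ell=1$, i.e.\ $\ell/c_1=\frac12$ (the value excluded from the definitions of $s$ and $t$), but for $c_1=4$ it forces $\ell\in\{1,3\}$, i.e.\ $\ell/c_1\in\{\frac14,\frac34\}$, which are the boundary values in the definitions of $s(b,c)$ and $\delta_{c,k,r}$; that, not a fold about the centre, is why $\V$ replaces $\U$ there. To turn your plan into a proof you would need to execute the computations of \cite{BJoy}, where all of these steps are carried out in detail.
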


\subsection{Circle Method}\label{SubsectionCircleMethod}  By Cauchy's Theorem, for any $ n\ge1 $ we have
\[A\left(\frac ac;n \right)=\frac1{2\pi i}\int_{\cal C}\frac{\O\left(\frac ac;q \right)}{q^{n+1}}dq,\]
where $\cal C $ is the circle of radius $ e^{-\frac{2\pi}{n}} $ parametrized by $ q=e^{-\frac{2\pi}{n}+2\pi it}, $  $ t\in[0,1], $ from where we further get
\[A\left(\frac ac;n \right)=\int_0^1\O\left(\frac ac;e^{-\frac{2\pi}{n}+2\pi it} \right)\cdot e^{2\pi-2\pi int}dt.\]
If $ \frac{h_1}{k_1}<\frac hk<\frac{h_2}{k_2} $ are adjacent Farey fractions in the Farey sequence of order $ N=\lfloor\sqrt n\rfloor, $ we set \[\vartheta_{h,k}':=\frac1{k(k_1+k)}\quad\text{and}\quad\vartheta_{h,k}'':=\frac1{k(k_2+k)}.\]
Splitting the path of integration along the Farey arcs $ -\vartheta_{h,k}'\le\Phi\le \vartheta_{h,k}'', $ where $ \Phi=t-\frac hk $ and $ 0\le h< k\le N $ with $(h,k)=1, $ we have
\begin{equation}\label{CircleFarey}
A\left(\frac ac;n \right)=\sum_{h,k}e^{-\frac{2\pi inh}{k}}\int_{-\vartheta_{h,k}'}^{\vartheta_{h,k}''} \cal O\left(\frac ac; e^{\frac{2\pi i}{k}(h+iz)}\right)\cdot e^{\frac{2\pi nz}{k}}  d\Phi,  \end{equation}
where $ z=\frac kn-k\Phi i. $ Applying Theorem \ref{Transformations} to \eqref{CircleFarey}, we can now express 
\begin{multline*}
A\left(\frac ac;n \right)=i\tan \left(\frac{\pi a}{c} \right) \sum_{\substack{h,k\\2|k,~c|k}}\frac{\omega^2_{h,k}}{\omega_{h,k/2}}(-1)^{k_1+1}\cot\left(\frac{\pi ah'}{c} \right)e^{-\frac{2\pi ia^2h'k_1}{c}-\frac{2\pi inh}{k}}\int_{-\vartheta'_{h,k}}^{\vartheta''_{h,k}} z^{-\frac12} e^{\frac{2\pi nz}{k}} \cal O\left(\frac{ah'}{c}; q_1\right)d\Phi\\
-\sqrt2i\tan \left(\frac{\pi a}{c} \right) \sum_{\substack{h,k\\2\nmid k,~c|k}}\frac{\omega^2_{h,k}}{\omega_{2h,k}}e^{\frac{\pi ih'}{8k}-\frac{2\pi ia^2h'k_1}{c}-\frac{2\pi inh}{k}}\int_{-\vartheta'_{h,k}}^{\vartheta''_{h,k}} z^{-\frac12} e^{\frac{2\pi nz}{k}} \cal U\left(\frac{ah'}{c}; q_1\right)d\Phi\\
+2\tan \left(\frac{\pi a}{c} \right) \sum_{\substack{h,k\\2|k,~c\nmid k,~c_1\ne 2}}\frac{\omega^2_{h,k}}{\omega_{h,k/2}}(-1)^{c_1(\ell+k_1)}e^{-\frac{2\pi ia^2h'k_1}{c_1c}-\frac{2\pi inh}{k}}\int_{-\vartheta'_{h,k}}^{\vartheta''_{h,k}} z^{-\frac12} e^{\frac{2\pi nz}{k}} \cal O\left(ah',\frac{\ell c}{c_1},c; q_1\right)d\Phi\\
+\tan \left(\frac{\pi a}{c} \right) \sum_{\substack{h,k\\2| k,~c\nmid k,~c_1= 2}}\frac{\omega^2_{h,k}}{\omega_{h,k/2}}e^{-\frac{\pi ia^2h'k_1}{c}-\frac{2\pi inh}{k}}\int_{-\vartheta'_{h,k}}^{\vartheta''_{h,k}} z^{-\frac12} e^{\frac{2\pi nz}{k}} \cal V\left(\frac{ah'}{c}; q_1\right)d\Phi\\
+\sqrt2\tan \left(\frac{\pi a}{c} \right) \sum_{\substack{h,k\\2\nmid k,~c\nmid k,~c_1\ne 4}}\frac{\omega^2_{h,k}}{\omega_{2h,k}}e^{\frac{\pi ih'}{8k}-\frac{2\pi ia^2h'k_1}{c_1c}-\frac{2\pi inh}{k}}\int_{-\vartheta'_{h,k}}^{\vartheta''_{h,k}} z^{-\frac12} e^{\frac{2\pi nz}{k}} \cal U\left(ah',\frac{\ell c}{c_1},c; q_1\right)d\Phi\\
+\frac1{\sqrt2}\tan \left(\frac{\pi a}{c} \right) \sum_{\substack{h,k\\2\nmid k,~c\nmid k,~c_1= 4}}\frac{\omega^2_{h,k}}{\omega_{h,k/2}}e^{\frac{\pi ih'}{8k}-\frac{2\pi ia^2h'k_1}{c_1c}-\frac{2\pi inh}{k}}\int_{-\vartheta'_{h,k}}^{\vartheta''_{h,k}} z^{-\frac12} e^{\frac{2\pi nz}{k}} \cal V\left(ah',\frac{\ell c}{c_1},c; q_1\right)d\Phi\\
+4\sin^2\left(\frac{\pi a}{c}\right) \sum_{\substack{h,k\\2|k}}\frac{\omega^2_{h,k}}{\omega_{h,k/2}\cdot k} e^{-\frac{2\pi inh}{k}} \sum_{\nu=0}^{k-1}(-1)^{\nu}e^{-\frac{2\pi ih'\nu^2}{k}}\int_{-\vartheta'_{h,k}}^{\vartheta''_{h,k}} z^{\frac12} e^{\frac{2\pi nz}{k}} I_{a,c,k,\nu}(z)d\Phi\\
+4\sqrt2\sin^2\left(\frac{\pi a}{c}\right) \sum_{\substack{h,k\\2\nmid k}}\frac{\omega^2_{h,k}}{\omega_{2h,k}\cdot k} e^{-\frac{2\pi inh}{k}} \sum_{\nu=0}^{k-1}e^{-\frac{\pi ih'}{k}(2\nu^2-\nu)}\int_{-\vartheta'_{h,k}}^{\vartheta''_{h,k}} z^{\frac12} e^{\frac{2\pi nz}{k}} I_{a,c,k,\nu}(z)d\Phi\\
=:\sum_1+\sum_2+\sum_3+\sum_4+\sum_5+\sum_6+\sum_7+\sum_8.
\end{multline*}

\subsection{Asymptotics for the coefficients $ A\left(\frac ac;n \right)  $} As explained in \cite{Ciolan}, a careful investigation shows that only the sums $ \sum_2 $ and $ \sum_5 $ contribute asymptotically, while all the others are seen to be of order $ O(n^{\varepsilon}), $ for any given $ \varepsilon>0. $ Consequently, we have the following.
\begin{Thm}[{\cite{Ciolan}}]\label{MainThm}
If $ 0<a<c $ are coprime positive integers and $ 
\varepsilon>0 $ is arbitrary, then  
\begin{align*}\label{thm}
A\left( \frac ac;n\right) &=i\sqrt{\frac 2n}\sum_{\substack{1\le k\le\sqrt n\\c|k,~2\nmid k}}\frac{B_{a,c,k}(-n,0)}{\sqrt k}\cdot\sinh\left( \frac{\pi\sqrt n}{k}\right)\nonumber \\
&\phantom{=~}+2\sqrt{\frac 2n}\sum_{\substack{1\le k\le \sqrt n\\c\nmid k,~2\nmid k,~c_1\ne4\\r\ge0,~ \delta_{c,k,r}>0}}\frac{D_{a,c,k}(-n,m_{a,c,k,r})}{\sqrt k}\cdot \sinh \left( \frac{4\pi\sqrt{\delta_{c,k,r}n}}{k} \right)+O_c(n^{\varepsilon}).
\end{align*}
\end{Thm}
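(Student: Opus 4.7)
The plan is to start from the eight-piece decomposition $A(a/c;n)=\sum_1+\cdots+\sum_8$ derived in Section \ref{SubsectionCircleMethod} and to prove that only $\sum_2$ and $\sum_5$ contribute to the main term, while the remaining six are absorbed into $O_c(n^{\varepsilon})$. Each of the first six sums is an integral over a Farey arc of $z^{-1/2}e^{2\pi nz/k}$ against one of the auxiliary functions $\O(\cdot;q_1)$, $\U(\cdot;q_1)$, or $\V(\cdot;q_1)$ furnished by Theorem \ref{Transformations}. The idea is to insert the Laurent expansion of each such function in the variable $q_1=e^{2\pi i(h'+i/z)/k}$, so that each arc-integral becomes a sum of terms of the form
\[
\int_{-\vartheta'_{h,k}}^{\vartheta''_{h,k}} z^{-1/2}\exp\!\left(\tfrac{2\pi nz}{k}-\tfrac{2\pi\delta}{kz}\right)d\Phi\cdot e^{2\pi i h'\mu/k},
\]
where $\delta,\mu$ come from the exponent of the particular $q_1$-mode chosen. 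Standard Rademacher--Wright estimates evaluate this integral as $\tfrac{2}{\sqrt{nk}}\sinh(4\pi\sqrt{\delta n}/k)$ plus a smaller error when $\delta>0$, and as $O(n^{-1/2})$ when $\delta\le 0$.

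The first step is then to classify, for each of $\sum_1,\ldots,\sum_6$, which Fourier modes yield a strictly positive $\delta$. For $\sum_2$, the Laurent expansion of $\U(ah'/c;q_1)$ contributes only the single positive-$\delta$ term at $\delta=\tfrac{1}{16}$, which, once paired with the Kloosterman-type phases, produces $B_{a,c,k}(-n,0)\sinh(\pi\sqrt{n}/k)$ and hence the first main term. For $\sum_5$, one expands $\U(ah',\ell c/c_1,c;q_1)$, combining its prefactor $q_1^{sb/c+b/(2c)-b^2/c^2}$ with the series $\sum_m q_1^{m(2m+1)/2+ms}/(1-e^{-2\pi ia/c}q_1^{m+b/c})$; a direct computation shows that the positive-$\delta$ modes are precisely parametrized by an integer $r\ge 0$ with $\delta=\delta_{c,k,r}$ and phase exponent $\mu=m_{a,c,k,r}$ as in Section \ref{Preliminaries}, and that these modes arise only for $\ell/c_1\in(0,1/4]\cup(3/4,1)$ (whereas for $\ell/c_1\in(1/4,3/4]$ all modes give $\delta\le 0$, in agreement with $\delta_{c,k,r}=0$). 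The remaining sums $\sum_1,\sum_3,\sum_4,\sum_6$ involve $\O$ and $\V$, whose Laurent expansions turn out to contain no mode with $\delta>0$; combined with Weil's bound on Kloosterman-type sums, each of these is therefore $O_c(n^{\varepsilon})$.

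For $\sum_7$ and $\sum_8$, the uniform bound $|H_{a,c}(x)|\ll 1$ together with the Gaussian factor in the definition of $I_{a,c,k,\nu}(z)$ yields $|I_{a,c,k,\nu}(z)|\ll\sqrt{k/\RE(z)}$ on the Farey arcs; combining this with $|\omega_{h,k}|=1$, with the $k$ terms of the $\nu$-sum, with the at most $k^{1+\varepsilon}$ terms of the $h$-sum, and with arc length $O(1/(kN))$, the two sums together are $O_c(n^{\varepsilon})$. The main obstacle is the exponent bookkeeping in the second paragraph: isolating from $\U(ah',\ell c/c_1,c;q_1)$ the finite set of Fourier modes with a strictly positive $1/z$-exponent, and matching them to the piecewise formulas for $\delta_{c,k,r}$ and $m_{a,c,k,r}$, is delicate but essentially algorithmic once one splits into the sub-ranges of $\ell/c_1$ and keeps careful track of signs. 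After that, Weil's estimate $\primesum_h e^{2\pi i(nh+mh')/k}\ll k^{1/2+\varepsilon}(n,k)^{1/2}$ applied to all non-main contributions closes the argument.
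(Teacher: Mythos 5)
Your proposal reconstructs essentially the proof given in \cite{Ciolan} (the present paper only cites this theorem and sketches the same outline): the circle-method splitting into $\sum_1,\dots,\sum_8$, the observation that only $\U\bigl(\tfrac{ah'}{c};q_1\bigr)$ (a single principal-part mode $\delta=\tfrac1{16}$, yielding $B_{a,c,k}(-n,0)\sinh(\pi\sqrt n/k)$) and $\U\bigl(ah',\tfrac{\ell c}{c_1},c;q_1\bigr)$ (the modes $\delta_{c,k,r}>0$ with phases $m_{a,c,k,r}$) carry exponentially growing terms, symmetrized Hankel-type integrals to evaluate those, and Kloosterman-type plus Mordell-integral estimates for everything else. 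Two small cautions: the exponent in your model integral should read $+\tfrac{2\pi\delta}{kz}$ (the growing modes are $q_1^{-\delta}$ with $\delta>0$) for the $\sinh\bigl(4\pi\sqrt{\delta n}/k\bigr)$ evaluation to apply, and the trivial count you give for $\sum_7,\sum_8$ only produces a small positive power of $n$, so obtaining $O_c(n^{\varepsilon})$ there genuinely requires the Kloosterman/Sali\'e-type cancellation (for sums twisted by the $\eta$-multiplier $\omega_{h,k}$, not the untwisted Weil bound) that you defer to your final sentence.
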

As remarked in \cite[p. 5]{Ciolan}, in evaluating the sums $ B_{a,c,k} $ and $ D_{a,c,k} $ from Theorem \ref{MainThm}, the integer $ h' $ is always taken to be even, cf. Bringmann and Lovejoy \cite[pp. 14--15]{BJoy}.

\section{Strategy of the proof}\label{Strategy}
In this section we will sketch the main ideas of the proof of Theorem \ref{ThmGeneralIneqs}. To this end, we believe it is in the benefit of the reader interested to consult \cite{BK} to keep the notation used there. \par We start by noting that, in light of the fact that $ \overline N(a,c,n)=\overline N(c-a,c,n), $ property which can be easily deduced from $ \ol N(m,n)=\ol N(-m,n) $ (see, e.g., \cite[Proposition 1.1]{Lovejoy}), it is enough to restrict our attention to the case when $ 0\le a<b\le\left\lfloor\frac{c}{2}\right\rfloor, $ and this is why we work under this assumption.
\subsection{Orthogonality relation}
From the orthogonality of the roots of unity, it follows that
\begin{equation}\label{orthogonality}
\sum_{n=0}^{\infty}\overline N(a,c,n)q^n=\frac1c\sum_{n=0}^{\infty}\overline p(n)q^n+\frac1c\sum_{j=1}^{c-1}\zeta_c^{-aj}\cdot \O(\zeta_c^j;q).
\end{equation}
Combining Theorem \ref{MainThm} with the identity \eqref{orthogonality}, we thus have
\begin{equation}\label{DifferenceOddc}
\sum_{n=0}^{\infty}(\overline N(a,c,n)-\ol N(b,c,n))q^n=\frac2c\sum_{j=1}^{\frac{c-1}{2}}\rho_j(a,b,c)  \O(\zeta_c^j;q)
\end{equation} if $ 2\nmid c, $
and \begin{equation}\label{DifferenceEvenc}
\sum_{n=0}^{\infty}(\overline N(a,c,n)-\ol N(b,c,n))q^n=\frac2c\sum_{j=1}^{\frac{c-2}{2}}\rho_j(a,b,c)  \O(\zeta_c^j;q)+((-1)^a-(-1)^b)\O(-1;q).
\end{equation} if $ 2\mid c, $ where \[\rho_j(a,b,c):=\cos\left( \frac{2\pi a j}{c}\right)-\cos\left(\frac{2\pi bj}{c} \right) . \]
\par Before moving forward, a  remark is in order, namely that  in the summations carried over $ j $ in the identities  \eqref{orthogonality}--\eqref{DifferenceEvenc}, we need to replace $ j $ and $ c $ by $ j'=\frac j{(c,j)} $ and $ c'=\frac c{(c,j)}. $  As such, whenever $ (c,j)>1, $  the statement of Theorem \ref{MainThm} applies to the coefficients 
$A\big(\frac{j'}{c'};q\big).$
\subsection{The main asymptotical contributions} Let us only focus for now on the case when $ c $ is odd, as the other case will be treated in essentially the same manner. For simplicity, we can further assume that $ c $ is prime. As explained in \cite{BK}, and as it will also become clear in the course of our proof, this assumption does not restrict the generality and it will only lead to slightly different bounds when estimating the error terms (which we do explicitly in the next section), without affecting the main result whatsoever. Coming back to our problem, let us write
\begin{equation*}
\overline N(a,c,n)-\ol N(b,c,n)=\sum_{j=1}^{\frac{c-1}{2}}(S_j(a,b,c)+T_j(a,b,c))+O_c(n^{\varepsilon}),
\end{equation*}
where
\begin{equation*}
S_j(a,b,c):=2\rho_j(a,b,c)i\sqrt{\frac 2n}\sum_{\substack{1\le k\le\sqrt n\\c|k,~2\nmid k}}\frac{B_{j,c,k}(-n,0)}{c\sqrt k}\cdot\sinh\left( \frac{\pi\sqrt n}{k}\right)
\end{equation*}and
\begin{equation*}
T_j(a,b,c):=4\rho_j(a,b,c)\sqrt{\frac 2n}\sum_{\substack{1\le k\le \sqrt n\\c\nmid k,~2\nmid k,~c_1\ne4\\r\ge0,~ \delta_{c,k,r}>0}}\frac{D_{j,c,k}(-n,m_{j,c,k,r})}{c\sqrt k}\cdot \sinh \left( \frac{4\pi\sqrt{\delta_{c,k,r}n}}{k} \right).
\end{equation*}  
\par Now, if we can identify the term that gives the main contribution to the sums $ S_j $ and $ T_j, $ and if this term has a positive coefficient, then we are done.
Indeed, we will shortly see that, for $ c\ge7, $ the main contribution can only come from the hyperbolic sine term corresponding to $ k=j=1$ and $r=0  $ in the summation for $ T_j$, in which case $ D_{1,c,1}=1 $ and $\rho_1(a,b,c)>0 $ for any $ 0\le a<b\le\frac{c-1}{2}.$ Proving that the value $ n_{a,b,c} $ exists, beyond which \eqref{GenericIneq} always holds, is then only a matter of carefully bounding some error terms. Our proof will also explain why, for the small moduli  $c\le 6$, the sign of the inequalities might depend on the residue class of $ n$ modulo $ c. $ 

\section{Proofs of the main results}\label{Proof}
In the way described in the previous section, we start by giving the proof of Theorem \ref{ThmGeneralIneqs}. The proof of Theorem \ref{ThmMod5} will then be nothing more than a computation, while Theorems \ref{ThmMod2}--\ref{ThmMod4} will follow fairly easily. 
\begin{proof}[Proof of Theorem \ref{ThmGeneralIneqs}]
We distinguish two cases, and we first treat the case when $ c\ge7 $ is odd.\\

\noindent {\sc Case 1:} $ 2\nmid c. $ Without restricting generality, assume that $ c $ is prime. While this assumption does not affect the result, it also simplifies our task, in that we do not have to worry about possible common divisors of $ c $ and $ j,  $ and so we do not need to pass down to $ j' $ or $ c' $ (if anything, this would just make the notation more cumbersome and lead to slightly different estimates for the error terms, but the conclusion is essentially the same). As already seen, we have 
\begin{equation*}
\overline N(a,c,n)-\ol N(b,c,n)=\sum_{j=1}^{\frac{c-1}{2}}(S_j(a,b,c)+T_j(a,b,c))+O_c(n^{\varepsilon}),
\end{equation*}
where
\begin{equation}
\label{Sj}
S_j(a,b,c)=2\rho_j(a,b,c)i\sqrt{\frac 2n}\sum_{\substack{1\le k\le\sqrt n\\c|k,~2\nmid k}}\frac{B_{j,c,k}(-n,0)}{c\sqrt k}\cdot\sinh\left( \frac{\pi\sqrt n}{k}\right)
\end{equation}and
\begin{equation}
\label{Tj}
T_j(a,b,c)=4\rho_j(a,b,c)\sqrt{\frac 2n}\sum_{\substack{1\le k\le \sqrt n\\c\nmid k,~2\nmid k,~c_1\ne4\\r\ge0,~ \delta_{c,k,r}>0}}\frac{D_{j,c,k}(-n,m_{j,c,k,r})}{c\sqrt k}\cdot \sinh \left( \frac{4\pi\sqrt{\delta_{c,k,r}n}}{k} \right).
\end{equation}
\subsection{Determining the dominant terms}
Knowing that the error term from Theorem \ref{MainThm} is of order $ O(n^\varepsilon), $ we only need to identify what is the main contribution coming from the sums $ S_j  $ and $ T_j. $ For this, we first compare the arguments of
the hyperbolic sines aapearing in \eqref{Sj} and \eqref{Tj}. In $ S_j, $ the main term occurs for $ k=c, $ giving $ \sinh\left(\frac{\pi\sqrt n}c\right) $ as the main hyperbolic sine argument, whereas in $ T_j $ it is given by $ k=1, $ in which case we have $ k_1=1 $ and $ c_1=c, $ thus the condition $ \ell\equiv jk_1\pmod*{c} $ yields $ \ell=j. $ Due to symmetry reasons, we can assume that $ \frac{\l}{c}\le\frac14, $ hence we compute  $ \delta_{c,k,r}=\delta_{c,1,r}=\big( \frac jc-\frac14\big)^2-r\frac{\l}{c_1},  $ with $ r\ge0, $ which is clearly maximized by $ \delta_{c,1,0}=\big( \frac jc-\frac14\big)^2,$ in case $ r=0. $
Running over  $ j, $ the maximum value is obviously attained for $ j=1, $ which gives $ \sinh \left( \pi\sqrt n\left(1-\frac 4c \right)\right)  $ as the main hyperbolic sine term. We only need to make sure that $ D_{1,c,1}(-n,0)\ne0, $ but this is clear since, for $ k=1, $  $ r=0 $ and any $ j, $ we easily see that $ D_{j,c,k}(-n,m_{j,c,k,r})=D_{j,c,1}(-n,0)=1. $  Now, since \[1-\frac4c=\frac{c-4}{c}>\frac1c\]
for any $ c\ge7 $ (in fact, for any $ c\ge6 $), we conclude that the term which gives the main contribution equals 
\begin{equation}\label{MainBound1}
T_1(a,b,c)=\frac4{c\sqrt n}\tan\left(\frac{\pi}c \right) \rho_1(a,b,c)\sinh \left( \pi\sqrt n\left(1-\frac 4c \right)\right),  \end{equation}
and this is positive for all $ 0\le a<b\le \frac{c-1}{2}. $
In order to infer that the inequality \eqref{GenericIneq} holds for $ n $ sufficiently large, we only need to prove that the contribution of the other terms entering the expressions of $ S_j $ and $ T_j $ is asymptotically smaller than $ T_1(a,b,c), $ and this is what we will do next.
\subsection{Bounding the contributions of $S_j$ and $T_j$. } We first consider $S_j$ and estimate
\begin{align*}
|S_j(a,b,c)|&\le\frac{2\sqrt2|\rho_j(a,b,c)|}{c\sqrt n}\sum_{\substack{1\le k\le\sqrt n\\c|k,~2\nmid k}}\frac{|B_{j,c,k}(-n,0)|}{\sqrt k}\cdot\sinh\left( \frac{\pi\sqrt n}{k}\right)\\
&\le\frac{2\sqrt2|\rho_j(a,b,c)|}{c\sqrt n}\left|\tan\left(\frac{\pi j}{c} \right)  \right|  \sinh\left( \frac{\pi\sqrt n}{c}\right)\sum_{\substack{1\le k\le\sqrt n\\c|k,~2\nmid k}}k^{-\frac12}\primesum\sum_{h}\frac1{\sin\left(\frac{\pi h}{c} \right) }
\end{align*}
We bound the innermost sum using the estimate 
\[\primesum\sum_{h}\frac1{\left| \sin\left(\frac{\pi h}{c} \right)\right| }\le\frac{2k}c\sum_{h=1}^{\frac{c-1}{2}}\frac1{\left| \sin\left(\frac{\pi h}{c} \right)\right|}\le \frac{2k}{\pi} \sum_{h=1}^{\frac{c-1}{2}}\frac1{h\left(1-\frac{\pi^2}{24} \right) }\le \frac{2k\left( 1+\log\left(\frac{c-1}{2} \right) \right) }{\pi\left( 1-\frac{\pi^2}{24}\right) },\]
where the first inequality follows from the fact that $ c\le k $ (because by assumption $ c\mid k $), the second from the inequality $ |\sin x|\ge x-\frac{x^3}{6}, $ and the third from the well-known bound for the harmonic series.
This gives further 
\begin{align*}
|S_j(a,b,c)|&<\frac{4\sqrt2|\rho_j(a,b,c)|}{c\sqrt n}\frac{\big|\tan\big(\frac{\pi j}{c} \big)  \big| \left( 1+\log\left(\frac{c-1}{2} \right)\right) }{\pi\left( 1-\frac{\pi^2}{24}\right)} \sinh\left( \frac{\pi\sqrt n}{c}\right)\sum_{\substack{1\le k\le\sqrt n\\c|k,~2\nmid k}}\sqrt k\\&
<\frac{16\cot\left(\frac{\pi }{2c} \right)   \left( 1+\log\left(\frac{c-1}{2} \right) \right) }{c^2\pi\left( 1-\frac{\pi^2}{24}\right)}n^{\frac34} \sinh\left( \frac{\pi\sqrt n}{c}\right),
\end{align*}where we used the Cauchy-Schwarz inequality to bound the sum over $ k $.
\par We next explicitly estimate the error coming from $ T_j. $ We have to look at the terms with $ k\ge2 $ and at those that  have $ k=1, $ but are different than the main term. For this, we trivially bound  $$ |D_{j,c,k}| \le\frac k{\sqrt2}\tan\left(\frac{\pi j}{c} \right). $$ For $ k\ge2, $ the argument of the hyperbolic sine is at most half that of the main term, namely  
\[\sinh \left( \pi\sqrt n\left(\frac12-\frac 2c \right)\right).\]
Next, note that the number of $ r $'s satisfying $ \delta_{c,k,r}>0 $ is decreasing as a function of $ \ell, $ and thus attains its maximum at $ \ell=1, $ in which case it equals
\[\left\lfloor\frac{c_1}{16}-\frac12+\frac1{c_1}\right\rfloor<\frac{c+8}{16},\]
therefore the contribution coming from $ k\ge2 $ can be estimated against 
\begin{equation}\label{MainBound2}
\frac{c+8}{2c\sqrt n}n^{\frac34}\cot\left(\frac{\pi }{2c} \right)\sinh \left( \pi\sqrt n\left(\frac12-\frac 2c \right)\right)<\frac{c+8}{4c\sqrt n}n^{\frac34}\cot\left(\frac{\pi }{2c} \right)e^{\pi\sqrt n\left(\frac12-\frac 2c \right)}.\end{equation}
Finally, the terms with $ k=1 $ have $ j\ge2. $ But $ k=1 $ implies $ k_1=1, $ and so $ \l=j\ge2, $ which then means that the contribution coming from these terms can be estimated against 
\begin{equation}\label{MainBound3}\frac{c+8}{2c\sqrt n}\cot\left(\frac{\pi }{2c} \right)\sinh \left( \pi\sqrt n\left(1-\frac 8c \right)\right)<\frac{c+8}{4c\sqrt n}\cot\left(\frac{\pi }{2c} \right)e^{\pi\sqrt n\left(1-\frac 8c \right)}.\end{equation}
On comparing \eqref{MainBound2} and \eqref{MainBound3} with the main term $ T_1(a,b,c) $ from \eqref{MainBound1}, we conclude this part of the proof. What is left to do is to make the error term explicit and bound it in an optimal way.  We do so after discussing the case $ 2\mid c. $\\

\noindent{\sc Case 2:} $ 2\mid c. $ In this case, we have 
\begin{equation*}
\sum_{n=0}^{\infty}(\overline N(a,c,n)-\ol N(b,c,n))q^n=\frac2c\sum_{j=1}^{\frac{c-2}{2}}\rho_j(a,b,c)  \O(\zeta_c^j;q)+((-1)^a-(-1)^b)\O(-1;q).
\end{equation*}
As explained in the proof of Corollary 1 from \cite[p. 22]{Ciolan}, the coefficients of $ \O(-1;q), $  a harmonic Maass form of weight $3/2,$ are of order $ O(n^\varepsilon). $ Therefore, we can ignore them from our analysis, since the main contribution will come from the sum over $j.$
\par We continue by noting that, since $ c $ is even, the terms $ B_{j,c,k} $ do not contribute to the sum, hence
\begin{equation*}
\sum_{n=0}^{\infty}(\overline N(a,c,n)-\ol N(b,c,n))q^n=\frac2c\sum_{j=1}^{\frac{c-2}{2}}T_j(a,b,c)+O(n^\varepsilon),
\end{equation*}
where 
\[T_j(a,b,c)=4\rho_j(a,b,c)\sqrt{\frac 2n}\sum_{\substack{1\le k\le \sqrt n\\c\nmid k,~2\nmid k,~c_1\ne4\\r\ge0,~ \delta_{c,k,r}>0}}\frac{D_{j,c,k}(-n,m_{j,c,k,r})}{c\sqrt k}\cdot \sinh \left( \frac{4\pi\sqrt{\delta_{c,k,r}n}}{k} \right).\]
The term corresponding to $ j=1 $ and $ k=1 $ gives $\sinh \left( \pi\sqrt n\left(1-\frac 4c \right)\right)  $ as maximum argument. For any other $ j $ (which might now have common divisors with $ c $), set $ j'=\frac{j}{(c,j)} $ and $ c'=\frac{c}{(c,j)}. $ From the congruence $ j'k_1\equiv\l\pmod*{c_1'} $ we see that any value $ c_1'\le 4 $ will give $ \delta_{c',k,r}=0, $ hence there is no hyperbolic sine argument contributing. For $ c_1'>4, $ it is then easy to see that $ \delta_{c',k,r} $ is maximized by $ k=1, $ which in turn forces $ k_1=1 $ and $ c_1'=c'. $ The hyperbolic sine term will then be given by $ \sinh \left( \pi\sqrt n\left(1-\frac {4j'}{c'} \right)\right)=\sinh \left( \pi\sqrt n\left(1-\frac {4j}{c} \right)\right),   $ and this is smaller than the leading term for any $ j\ge2. $ 
\par We can now conclude the proof in this case by arguing in the following way. Pick any $ j\ge2. $ If $ c' $ is even, then we are done, as the maximum possible contribution would be $$ \sinh \left(\pi\sqrt n \left(1-\frac{4j'}{c_1'} \right) \right)=\sinh \left(\pi\sqrt n \left(1-\frac{4j'}{c'} \right) \right)=\sinh \left(\pi\sqrt n \left(1-\frac{4j}{c} \right) \right)\le \sinh \left(\pi\sqrt n \left(1-\frac{8}{c} \right) \right),$$ which is smaller than the main term. The only issue might appear when $ 2\nmid c', $ as there may be then another possible main term coming from the $ B_{j,c,k} $ sum, namely the one containing $ \sinh\left( \frac{\pi\sqrt n}{c'}\right). $ In order to establish which one is bigger between these two, we need to compare $ \frac1{c'} $ with $ 1-\frac{4}{c}. $ Since we have \[\frac{1}{c'}+\frac4c=\frac jc+\frac4c\le\frac12+\frac12=1\] for $ c\ge8, $ we are done. 
\par What this shows is that for any $ c\ge7, $ regardless of whether $ c $ is even or odd, the main contribution is always given by the term in \eqref{MainBound1}. A separate study is required for $ c\le 6, $ as then the hyperbolic sine arguments from the sum $B_{j,c,k}$ and $ D_{j,c,k} $ might coincide, and we need to explicitly evaluate their coefficients in order to establish which one dominates asymptotically. As the case $c=6 $  has already been covered, one only needs to study the cases $ c\in\{2,3,4,5\}. $
\par Next, we want to make explicit the error term appearing in Theorem \ref{MainThm}. This would ensure then the existence of a number $ n_{a,b,c} $ such that the inequalities hold for every $ n>n_{a,b,c}. $
\subsection{Estimating the error terms} The analysis is a bit tedious, as the various sums give different types of errors and, as such, we need to bound them in different ways. In some cases, the arguments are similar to those from \cite{BK},  while in some others we manage to simplify them and come up with neater estimates. Certainly, and this might be an interesting question for a minutious reader, our bounds can be improved. 
\par As we do not want to repeat too much material from \cite{Ciolan}, we kindly invite the reader to follow the steps presented in the proof of  \cite[Theorem 1]{Ciolan}. This is essential in understanding the following estimates.
\subsubsection{Estimation of the error term arising from the Circle Method} 
For $ \sum_2, $ the error term, let us denote it by $ S_2, $ comes from the two sums taken over $ r\ge1 $ in the expression for $ \widetilde \U\left(\frac{ah'}{c};q_1 \right)  $ from \cite[p. 15]{Ciolan}.
Using well-known facts from the theory of Farey arcs, such as
\begin{equation*}
{\rm Re}(z)=\frac kn,\quad  {\rm Re}\left( \frac 1z\right)>\frac k2,\quad |z|^{-\frac12}\le n^{\frac12}\cdot k^{-\frac12} \quad\text{and}\quad \vartheta_{h,k}'+\vartheta_{h,k}''\le \frac{2}{k(N+1)},
\end{equation*} 
we can bound the error term coming from $\sum_2$ by
\begin{align*}S_2&< 4e^{2\pi}\sqrt2\cot\left(\frac{\pi }{2c} \right)\sum_{k}k^{-\frac12}\cdot \sum_{r\ge1} \ol p(r)\Big( e^{-\frac{(16r-1)\pi}{16}}+e^{-\frac{(16r+7)\pi}{16}}\Big)\\&<4c_1e^{2\pi}\sqrt2\cot\left(\frac{\pi }{2c} \right)\sum_{k}k^{-\frac12}, \end{align*}
where $$ c_1:=\sum_{r\ge1} e^{\pi\sqrt r}\Big( e^{-\frac{(16r-1)\pi}{16}}+e^{-\frac{(16r+7)\pi}{16}}\Big). $$
In the same way we bound the error terms coming from the sums $ \sum_5 $ and $ \sum_6, $ which we denote $ S_5 $ and $ S_6, $  by
\begin{align*}S_5&<c_2e^{2\pi}\sqrt2\cot\left(\frac{\pi }{2c} \right)\sum_{k}k^{-\frac12},\\
S_6&<\frac{c_2e^{2\pi}}{\sqrt2}\cot\left(\frac{\pi }{2c} \right)\sum_{k}k^{-\frac12},\end{align*}where
\[c_2:=2\sum_{r\ge1}e^{\pi\sqrt r}e^{-\frac{(c^2-8)\pi r}{16c^2}}.\]
Now, for the sums $ \sum_1,\sum_3,\sum_4 $ we can argue similarly, and, denoting the error terms by $ S_1,S_3,S_4 $ respectively, we obtain 
\begin{align*}
S_1&<4c_3e^{2\pi}\cot\left(\frac{\pi }{2c} \right)\sum_{k}k^{-\frac12},\\
S_3&<2c_4e^{2\pi}\cot\left(\frac{\pi }{2c} \right)\sum_{k}k^{-\frac12},\\
S_4&<c_5e^{2\pi}\cot\left(\frac{\pi }{2c} \right)\sum_{k}k^{-\frac12},
\end{align*}
where \begin{align*}
c_3&:=\sum_{r\ge1}e^{\pi\sqrt r}e^{-\pi r},\\
c_4&:=\sum_{r\ge1}e^{\pi\sqrt r}e^{-\frac{\pi r}{2c^2}},\\
c_5&:=\sum_{r\ge1}e^{\pi\sqrt r}e^{-\frac{(2r+1)\pi }{8}}.
\end{align*}

\subsubsection{Error terms given by the Mordell integrals} Note that $ H_{j,c}(x)=H_{j,c}^+(x)+H_{j,c}^-(x), $
where \[H_{j,c}^{\pm}=\pm\frac{i}{8\cosh\left(\frac x2 \right)\sin \left(\frac{\pi a}{c} \right)\sinh\left(\frac x2\pm\frac{\pi a }{c} \right)   }.\]
We can therefore split the sum just like in \cite[pp. 937--938]{BK}, and we denote the contributions of these functions to $ I_{j,c,k,\nu} $ by $ I_{j,c,k,\nu}^\pm. $ From the proof of \cite[Lemma 1]{Ciolan} we obtain 
\begin{align*}z^{\frac12}I_{j,c,k,\nu}^\pm&\le \frac{\sqrt k}{8\sqrt2\big| \sin\big(\frac{\pi\nu}{k} -\frac{\pi}{4k}\pm\frac{\pi j}{c}\big)\big| \big| \sin \big(\frac{\pi j}{c}\big)\big|  \big( \RE\big(\frac1z \big) |z|\big) ^{\frac12}}\\&\le\frac{\sqrt n}{8\sqrt k\big| \sin\big(\frac{\pi\nu}{k} -\frac{\pi}{4k}+\frac{\pi a}{c}\big)\big| \big| \sin \big(\frac{\pi j}{c}\big)\big| }.\end{align*}
Denoting 
\[\cal S_{a,c,k}^\pm:=\sum_{\nu=1}^{k}\frac1{\left| \sin\left(\frac{\pi \nu}{k}-\frac{\pi }{4k}\pm\frac{\pi  a}{c} \right)\right|},\]
we have \begin{align*}
\cal S_{a,c,k}^-&\le \sum_{\nu=1}^{k}\frac{1}{\left| \sin\left(\frac{\pi\nu}k-\frac {\pi}{2k} \right)\right|}=\sum_{\nu=1}^{\left\lfloor\frac k2\right\rfloor}\frac1{ \sin\left(\frac{\pi\nu}k-\frac{\pi}{2k} \right)}+\sum_{\nu=0}^{\left\lfloor\frac{k+1}2\right\rfloor-1}\frac1{ \sin\left(\frac{\pi\nu}k+\frac{\pi}{2k} \right)}\\
&<\frac k{\pi} \sum_{\nu=1}^{\left\lfloor\frac k2\right\rfloor}\frac1{\left(\nu-\frac12\right)\left(1-\frac16\left( \frac{\pi}{k}\left( \left\lfloor\frac k2 \right\rfloor-\frac12\right) \right)^2  \right)  }+\frac k{\pi}\sum_{\nu=0}^{\left\lfloor\frac {k+1}2\right\rfloor-1}\frac1{\left(\nu+\frac12\right)\left(1-\frac16\left( \frac{\pi}{k}\left( \left\lfloor\frac{k+1}2 \right\rfloor-\frac12\right) \right)^2  \right)  } 
\\
&<\frac{2k\log\left(\frac k2 \right) }{\pi\left(1-\frac{\pi^2}{24} \right) },
\end{align*}
and similarly for $ \cal S_{a,c,k}^+.$ We thus obtain 
\[\sum_7 \le \frac{\log\left(\frac k2 \right) }{2\pi\left(1-\frac{\pi^2}{24} \right)\sin\left(\frac{\pi}{c} \right)} \sum_k \sqrt k<\frac{n^{\frac34}\log \left( \frac n4\right) }{2\pi\left(1-\frac{\pi^2}{24} \right)\sin\left(\frac{\pi}{c} \right) },\] and a similar bound holds for $ \sum_8. $
\subsubsection{Symmetrizing the paths of integration} Such errors are only given by the sums that contribute to the main term, which are $ \sum_2,\sum_5$ and $\sum_6. $ As it can be seen in \cite[p. 19]{Ciolan}, making the path of integration symmetric leads to a Hankel-type integral that will give the main term, and an error term arising from integrating over the remaining parts of the interval. This error is what we estimate in what follows.
\par We decompose
\[\int_{-\vartheta_{h,k}'}^{\vartheta_{h,k}''}=\int_{-\frac1{kN}}^{\frac1{kN}}-\int_{-\frac1{kN}}^{-\frac1{k(k_1+k)}}-\int_{\frac1{k(k_2+k)} }^{\frac1{kN}}\]
and we want to  estimate the contributions to the error terms from the last two integrals in the same way as before, with the only difference that, on these other parts of the Farey intervals, we have
$$ {\rm Re}(z)=\frac kn, \quad {\rm Re}\left(\frac 1z \right)<k \quad \text{and}\quad  |z|^2\ge \frac{k^2}{n^2}. $$
In this way, we get that this contribution is less than 
\[\sqrt 2e^{2\pi+\frac{\pi}{8}}\cot\left(\frac{\pi}{2c} \right)n^{-\frac12} \frac{ 1+\log\left(\frac{c-1}{2}  \right) }{\pi\left( 1-\frac{\pi^2}{24}\right) }\sum k^{-\frac12}. \]
Now, since $ \delta_{c,k,r}\le\frac1{16}, $ the exact same bounds (multiplied by 2 for $ \sum_5 $) hold for $ \sum_5 $ and $ \sum_6. $ 
\subsubsection{Errors introduced by integrating along the smaller arc} This is very easy and goes along the same lines as in \cite[p. 939]{BK}. The contribution to $ \sum_2 $ is less than  

\[2\sqrt2\left(\frac43+2^{\frac54}\right)e^{2\pi+\frac{\pi}8} \cot\left(\frac{\pi}{2c} \right)\frac{1+\log\left(\frac{c-1}{2}  \right) }{\pi\left( 1-\frac{\pi^2}{24}\right) }n^{\frac14}, \]
the precise same bounds (multiplied by 2 for $ \sum_5 $) being valid for $ \sum_5 $ and $ \sum_6. $ 
Putting together all these bounds, the proof is complete.
\end{proof}
More than the result in itself, the following is a perfect example of Theorem \ref{ThmGeneralIneqs} at play and it illustrates why the general reasoning fails for moduli $ c<7. $
\begin{proof}[Proof of Theorem \ref{ThmMod5}]
As it can be readily seen, the argument of the leading hyperbolic sine term is the same in the sums $ B_{1,5,5}, $ $B_{2,5,5}$ and $ D_{1,5,1}, $ and equals $ \sinh\left( \frac{\pi\sqrt{n}}{5}\right) . $ Therefore, to establish the sign of the rank difference $ \ol N(a,5,n)-\ol N(b,5,n), $ we need to compute the leading coefficients of these  terms. Without much effort, and making use of several known properties of the Dedekind sums (see, e.g., \cite[Ch. 3.7]{Apostol}), we compute
\[i\sqrt{\frac2n}B_{1,5,5}(-n,0)={\frac{1}{\sqrt{5n}}}\tan\left( \frac{\pi}{5}\right) \cdot\begin{cases}
\phantom{+}0 & \text{if~$n\equiv0\pmod*5,$}\\
\phantom{+}3+\sqrt5 & \text{if~$n\equiv1\pmod*5,$}\\
\phantom{+}1-\sqrt5 & \text{if~$n\equiv2\pmod*5,$}\\
-1-\sqrt5 & \text{if~$n\equiv3\pmod*5,$}\\
-3+\sqrt5 & \text{if~$n\equiv4\pmod*5,$}\\
\end{cases}\]
and \[i\sqrt{\frac2n}B_{2,5,5}(-n,0)={\frac{1}{\sqrt{5n}}}\tan\left( \frac{2\pi}{5}\right) \cdot\begin{cases}
\phantom{+}0 & \text{if~$n\equiv0\pmod*5,$}\\
\phantom{+}4 & \text{if~$n\equiv1\pmod*5,$}\\
-2 & \text{if~$n\equiv2\pmod*5,$}\\
\phantom{+}2 & \text{if~$n\equiv3\pmod*5,$}\\
-4 & \text{if~$n\equiv4\pmod*5,$}\\
\end{cases}\]
while 
\[2\sqrt{\frac2n}D_{1,5,1}(-n,0)=\frac{2}{\sqrt n}\tan\left( \frac{\pi}{5}\right).\]
Bounding the error terms as described before, a numerical check in Mathematica shows that the result holds for all values $ n>40, $ with the exceptions presented in Table \ref{TableMod5}.\par One should also note that, although our approach cannot establish identities, a simple trigonometric computation shows  that $$\left(1+\frac{1}{\sqrt5} \right) \left(1-\cos\left( \frac{4\pi}{5}\right) \right)\tan\left(\frac{\pi}{5} \right)+ \frac1{\sqrt5}\left(1-\cos\left( \frac{8\pi}{5}\right)\right) \tan\left( \frac{2\pi}{5}\right)=0,  $$   which means that the main asymptotic contributions of $ \ol N(a,5,5n+d)$ and $ \ol N(b,5,5n+d)$ coincide and, as such, is an indication of the fact that $ \ol N(a,5,5n+d) = \ol N(b,5,5n+d)$ holds $ a=0,~b=2 $ and $ d=2. $ This result was already proven by Lovejoy and Osburn, see eq. (12) from \cite[Theorem 1.2]{LO}.
\end{proof}

\begin{rem}
For $ c=6 $ we can also give an alternative proof to all the inequalities listed in Theorem 1.2 from \cite{JZZ}. Indeed, we have
\begin{align*}\sum_{n=0}^{\infty}(\overline N(a,6,n)-\ol N(b,6,n))q^n&=\frac13\rho_1(a,b,6)  \O\left(\frac16;q\right)+\frac13\rho_2(a,b,6)\O\left(\frac16;q\right)+\frac16 \O\left(\frac12;q\right)\\
&=\frac13\rho_1(a,b,6)  \O\left(\frac16;q\right)+\frac13\rho_2(a,b,6)\O\left(\frac16;q\right)+O(n^\varepsilon)
\end{align*}
The main contribution of the first term equals  $$\rho_1(a,b,6)A\left(\frac1{6};n \right)\sim \frac4{c\sqrt n}\tan\left(\frac{\pi}{3} \right)\left( \cos\left(\frac{\pi a}{3}\right) -\cos\left(\frac{\pi b}{3} \right) \right)  \sinh\left( \frac{\pi \sqrt n}{3}\right),  $$
while that of the second term equals
\[\rho_2(a,b,6)A\left(\frac1{3};n \right)\sim\begin{cases}\phantom{-}\dfrac2{3\sqrt{n}}\tan\left( \dfrac{\pi}{3}\right)\left( \cos\left(\frac{2\pi a}{3}\right) -\cos\left(\frac{2\pi b}{3} \right) \right)\sinh \left( \dfrac{\pi\sqrt{n}}{3} \right)&\text{if~}n\equiv0,1\pmod*3,\\
-\dfrac4{3\sqrt{n}}\tan\left( \dfrac{\pi}{3}\right)\left( \cos\left(\frac{2\pi a}{3}\right) -\cos\left(\frac{2\pi b}{3} \right) \right)\sinh \left( \dfrac{\pi\sqrt{n}}{3} \right)&\text{if~}n\equiv2\phantom{,0}\pmod*3. \end{cases}\]
Since, for $ j=2, $ the arguments appearing in $ \rho_2(a,b,6) $ are no longer in the interval $ [0,\pi), $ on which the cosine function is decreasing, the inequalities now depend, additionally, on the values of $ a $ and $ b. $ While our line of reasoning cannot be used to prove the identities from \cite[Theorem 1.4]{JZZ}, it clearly suggests that they should hold true, in light of the fact that the main hyperbolic sines are equal.
In passing, we also correct Example 1 from \cite{Ciolan}, in which some misprints seem to have occurred, the correct asymptotic values for $ A\left(\frac1{3};n \right) $ being those given here. In particular, Theorem \ref{ThmMod3} is now a straightforward consequence of these asymptotics. 
\end{rem}
Further, the proof of Theorem \ref{ThmMod2} follows on noting that, from identity \eqref{orthogonality}, we obtain
\[N(0,2,n)-N(1,2,n)=\O(-1;q)\]
and on invoking the next result. The numerical check is in this case greatly simplified.
\begin{Lem}
The coefficients (other than the leading term) of the series $$ \O(-1;q)=1+2q-4q^2+8q^3-10q^4+\cdots $$ are alternating in sign. What is the same,  the coefficients (other than the leading term) of the series  $$ \O(-1;-q)=1-2q-4q^2-8q^3-10q^4-\cdots $$ are negative.
\end{Lem}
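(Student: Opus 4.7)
The plan is to prove the equivalent form of the lemma, namely that $1-\O(-1;-q)$ has non-negative coefficients. Starting from the Hecke-type representation recalled in Section \ref{RankGF}, specialising at $u=-1$ (where $(1-u)(1-u^{-1})=4$ and $(1-uq^n)(1-u^{-1}q^n)=(1+q^n)^2$) yields the clean identity
\[
\O(-1;q)=\ol P(q)\biggl(1+8\sum_{n\ge 1}\frac{(-1)^n q^{n^2+n}}{(1+q^n)^2}\biggr).
\]
Crucially the exponents $n^2+n=n(n+1)$ are always even, so the substitution $q\mapsto -q$ leaves those monomials invariant and acts only on $\ol P$ and on the denominators $(1+q^n)^2$, which become $(1+q^n)^2$ for $n$ even and $(1-q^n)^2$ for $n$ odd.

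The next step is to expand $(1\pm q^n)^{-2}=\sum_{k\ge 0}(\mp 1)^k(k+1)q^{kn}$ in the inner sum, reindex the resulting double series by the exponent $m=n(n+k+1)$ of $q$, and split the $n$-sum by parity. The odd-$n$ contribution is then already a Lambert-type series with uniform non-positive sign, and the task reduces to showing that its product with $\ol P(-q)$, combined with the even-$n$ contribution, still produces non-positive coefficients. The guiding hope, in analogy with the classical identity $\ol P(q)^{-1}=\sum_{n\in\Z}(-1)^n q^{n^2}$ used several times earlier in the paper, is that a Jacobi- or Hecke-type collapse reduces $1-\O(-1;-q)$ to either a single Lambert series $\sum_{n\ge 1}c_nq^n$ with $c_n$ manifestly non-negative, or to a product of two factors whose sign patterns can be read directly from their Euler products.

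The main obstacle is precisely this algebraic collapse: the raw Cauchy product $\ol P(-q)\cdot\bigl(1+8\sum\cdots\bigr)$ has summands of both signs, and what is needed is an identity, not a mere sign-reversing pairing. If the direct $q$-series manipulation does not yield a tidy closed form, a viable fallback is to invoke the description of $\O(-1;q)$ as the holomorphic part of a harmonic Maass form of weight $1/2$ due to Bringmann and Lovejoy \cite{BJoy}, combined with the $c=2$ case of Theorem \ref{MainThm}. For $c=2$, one has $(c,k)=1$ and $k_1=k$ for all odd $k$, so the quantity $\ell/c_1$ is forced to equal $1/2$; hence $\delta_{2,k,r}=0$ for every $k,r$ and the main sum of Theorem \ref{MainThm} degenerates. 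This matches the observation that the coefficients of $\O(-1;q)$ grow only polynomially, and reduces the lemma, via an explicit error bound on the circle-method expansion of $A(\tfrac 12;n)$, to a finite numerical check.
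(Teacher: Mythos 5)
Your specialization of the Appell--Lerch representation at $u=-1$ is correct (the factor $(1-u)(1-u^{-1})=4$ and the denominators $(1+q^n)^2$, hence the coefficient $8$), but the argument then stops exactly at the decisive point, and you say so yourself: after $q\mapsto -q$ the even-$n$ terms keep the denominators $(1+q^n)^{-2}$ together with the alternating factor $(-1)^n$, the Cauchy product against $\ol P(-q)$ has summands of both signs, and the ``Jacobi- or Hecke-type collapse'' you hope for is never produced. A proof whose central step is a guiding hope is a gap, and the gap is precisely the missing identity. The paper avoids the problem by starting from a different expansion altogether: Lovejoy's identity \eqref{O-1-q}, obtained by summing overpartitions according to their largest part,
\[\O(z;q)=1+2q+z^{-1}\sum_{n=1}^{\infty}\frac{(-q/z)_n(zq)^n(1+zq)}{(q/z)_n},\]
in which the specialization $z=-1$, $q\mapsto -q$ turns every summand into an expression of the shape $(1+q)q^n\prod_{k=1}^{n}\frac{1+q^k}{1-q^k}$, whose $q$-expansion is termwise non-negative. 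Negativity of all non-leading coefficients of $\O(-1;-q)$ is then read off summand by summand, with no cancellation analysis and no pairing argument at all. This ``sum over the largest part'' representation, not the Appell--Lerch one, is the key input you are missing.

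Your fallback route is not merely incomplete; it cannot work in principle. For $c=2$ you correctly compute that $\ell/c_1=\frac12$, hence $\delta_{2,k,r}=0$ for all $k,r$ and both main sums in Theorem \ref{MainThm} are empty (the $B$-sum requires $c\mid k$ with $2\nmid k$, impossible for $c=2$). But that degeneration means the circle method delivers only $A\bigl(\frac12;n\bigr)=O_{c}(n^{\varepsilon})$, i.e.\ a bound on the \emph{absolute value} of the coefficient with no signed dominant term. Making the error term explicit then bounds the entire coefficient, which is compatible with either sign at every $n$; consequently no finite numerical check, however large, can establish a sign statement for infinitely many coefficients. The mechanism that makes the paper's other results reducible to finite checks --- a dominant hyperbolic-sine main term whose sign is known, plus explicit error bounds --- is exactly what is unavailable here, which is why the paper proves this lemma by an exact $q$-series identity instead of by asymptotics. (Minor point: $\O(-1;q)$ is the holomorphic part of a harmonic Maass form of weight $3/2$, not $1/2$; see Case 2 of the proof of Theorem \ref{ThmGeneralIneqs}.)
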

\begin{proof}
Summing according to the largest part of the overpartition, we obtain, cf. eq. (3.1) from \cite{Lovejoy},
\begin{equation}
\label{O-1-q}
\O(z;q)=1+2q+z^{-1}\sum_{n=1}^{\infty}\frac{(-q/z)_n(zq)^n(1+zq)}{(q/z)_n}.
\end{equation}
Setting $ z=-1 $ and $ q\mapsto -q $ in \eqref{O-1-q}, we have
\[\O(-1;-q)=1-2q-\sum_{n=1}^{\infty}\frac{(-q)_nq^n(1+q)}{(q)_n}=1-2q-\sum_{n=1}^{\infty}\prod_{k=1}^n\frac{1+q^k}{1-q^k}(1+q)q^n,\]
and it is clear that the coefficients of the sum on the right, expressed as a $ q $-series, are all positive.
\end{proof}
Using the fact (see, e.g., the
proof of Theorem 5.6 from \cite[p. 330]{Lovejoy}) that \[\O(i;q)=1+2\sum_{n=1}q^{n^2},\] Theorem \ref{ThmMod4} becomes an easy exercise, which we leave to the interested reader (this was also proven, by a different and independent approach, in \cite{CGS}).
\section*{Acknowledgments}
The paper was completed during a stay at the Max Planck Instute for Mathematics, Bonn. The author is grateful to the institute and its staff for their hospitality and support. 


\begin{thebibliography}{99}
\bibitem{AG} G. E. Andrews and F. Garvan, Dyson's crank of a partition, \textit{Bull. Amer. Math. Soc.} \textbf{18} (1988), 167--171.
\bibitem{AL} G. E. Andrews and R. P. Lewis, The ranks and cranks of partitions moduli 2, 3 and 4, \textit{J.
Number Th.} \textbf{85} (2000), 74--84.
\bibitem{Apostol} T. M. Aposotol, \textit{Modular functions and Dirichlet series in number theory}, Second edition, Graduate Texts in Mathematics, 41. Springer-Verlag, New York, 1990.
\bibitem{AH} A. O. L. Atkin and S. M. Hussain, Some properties of partitions. II, \textit{Trans. Amer. Math. Soc.} \textbf{89}
(1958), 184--200.
\bibitem{ASD} A. O. L. Atkin and P. Swinnerton-Dyer, Some properties of partitions, \textit{Proc. London Math. Soc. (3)} \textbf{4} (1954), 84--106.
\bibitem{B} K. Bringmann, Asymptotics for rank partition functions, \textit{Trans. Amer. Math. Soc.} \textbf{361} (2009), no. 7, 3483--3500.
\bibitem{BK} K. Bringmann and B. Kane, Inequalities for differences of Dyson's rank for all odd moduli, \textit{Math. Res. Lett.} \textbf{17} (2010), no. 05, 927--942.
\bibitem{BJoy} K. Bringmann and J. Lovejoy, Dyson's rank, overpartitions, and weak Maass Forms, \textit{Int. Math. Res. Not. IMRN} (2007), no. 19, Art. ID rnm063, 34 pp.	
\bibitem{Ciolan} A. Ciolan: Overpartition ranks: Asymptotics and Inequalities, \textit{J. Math. Anal. Appl.} \textbf{480} (2019), Art. 123444.
\bibitem{Ciolan2} A. Ciolan: Equidistribution and inequalities for partitions into powers; submitted for publication, available as preprint at \url{https://arxiv.org/abs/2002.05682}.
\bibitem{CL} S. Corteel and J. Lovejoy, Overpartitions, \textit{Trans. Amer. Math. Soc.} \textbf{356} (2004), no. 4, 1623--1635.
\bibitem{CGS} S.-P. Cui, N. S. S. Gu and C.-Y. Su, Ranks of overpartitions modulo 4 and 8, \textit{Int. J. Numbery Theory} \textbf{16} (2020), no. 10, 2293--2310.
\bibitem{Dyson} F. J. Dyson, Some guesses in the theory of partitions, \textit{Eureka} (1944), no. 8, 10--15. 
\bibitem{CJS} C. Jennings-Shaffer, Overpartition rank differences modulo 7 by Maass forms, \textit{J. Number Theory} \textbf{163} (2016), 331--358.
\bibitem{JZZ} K. Q. Ji, H. W. J. Zhang and A. X. H. Zhao, Ranks of overpartitions modulo $ 6 $ and $ 10, $ \textit{J. Numbery Theory} \textbf{184} (2018), 235--269.
\bibitem{Lewis} R. P. Lewis, The ranks of partitions modulo 2, \textit{Discuss. Math.} \textbf{167/168} (1997), 445--449.
\bibitem{Lovejoy} J. Lovejoy, Rank and conjugation for the Frobenius representation of an overpartition, \textit{Ann. Comb.} \textbf{9} (2005), no. 3, 321--334.
\bibitem{LO} J. Lovejoy and R. Osburn, Rank differences for overpartitions, \textit{Q. J. Math.} \textbf{59} (2008), no. 2, 257--273.

\bibitem{Ram}  S. Ramanujan, Congruence properties of partitions, \textit{Math. Z.} \textbf{9} (1921), 147--153.
\bibitem{WZ} B. Wei and H. W. J. Zhang, Generalized Lambert series identities and applications in rank differences; available as preprint at \url{https://arxiv.org/abs/1801.04643}.
\end{thebibliography}
\end{document}